\definecolor{lightgreen}{rgb}{0.22,0.50,0.25}
\definecolor{lightblue}{rgb}{0.22,0.45,0.70}
\newtheorem{remark}{Remark}
\newcommand{\bn}{\boldsymbol{n}}
\newcommand{\bu}{\boldsymbol{u}}
\newcommand{\bv}{\boldsymbol{v}}
\newcommand{\bw}{\boldsymbol{w}}
\renewcommand{\bf}{\boldsymbol{f}}
\newcommand{\bx}{\boldsymbol{x}}
\newcommand{\bz}{\boldsymbol{z}}
\newcommand\bzero{\boldsymbol{0}}
\newcommand\bkappa{\boldsymbol{\kappa}}
\newcommand{\rF}{\mathtt{F}}
\newcommand{\rH}{\mathrm{H}}
\newcommand{\rL}{\mathrm{L}}
\newcommand{\rW}{\mathrm{W}}
\newcommand{\rQ}{\mathrm{Q}}
\newcommand{\bH}{\mathbf{H}}
\newcommand{\bR}{\mathbf{R}}
\newcommand{\bL}{\mathbf{L}}
\newcommand{\bW}{\mathbf{W}}
\newcommand{\bV}{\mathbf{V}}
\newcommand{\bZ}{\mathbf{Z}}
\newcommand{\cA}{\mathcal{A}}
\newcommand{\cB}{\mathcal{B}}
\newcommand{\cE}{\mathcal{E}}
\newcommand{\cL}{\mathcal{L}}
\newcommand{\cT}{\mathcal{T}}
\newcommand{\cS}{\mathcal{S}}
\newcommand\vdiv{\mathop{\mathrm{div}}\nolimits}
\numberwithin{equation}{section}
\numberwithin{remark}{section}
\numberwithin{figure}{section}
\numberwithin{table}{section}
\definecolor{forestgreen}{RGB}{0,150,0}
\newcommand{\revthree}{}
\newcommand{\revone}{}
\newcommand{\revtwo}{}
\title{Robust stability and preconditioning of Darcy--Forchheimer equations\thanks{\textbf{Updated:} \today.\funding{This work has been partially supported by  the Australian Research Council through the \textsc{Future Fellowship} grant FT220100496 and \textsc{Discovery Project} grant DP22010316; and by the Centre for Advanced Study (CAS) at the Norwegian Academy of Science and Letters under the program \textsc{Mathematical Challenges in Brain Mechanics}.}}}
\author{Rishi Das\thanks{
IITB–Monash Research Academy, Indian Institute of Technology Bombay, Powai, Mumbai, Maharashtra 400076, India (\email{rishi.das@iitb.ac.in}).} 
\and  Harsha Hutridurga\thanks{Department of Mathematics, Indian Institute of Technology Bombay, Powai, Mumbai, Maharashtra 400076, India (\email{harsha@math.iitb.ac.in}).}
\and Amiya K. Pani\thanks{Department of Mathematics, BITS-Pilani, KK Birla Goa Campus, Zuarinagar, Goa-403726, India (\email{amiyap@goa.bits-pilani.ac.in}).}
\and Ricardo Ruiz-Baier\thanks{Corresponding author: School of Mathematics, Monash University, 9 Rainforest Walk, Melbourne VIC 3800, Australia, and Universidad Adventista de Chile, Casilla 7-D Chill\'an, Chile (\email{ricardo.ruizbaier@monash.edu}).}}
\date{\today}
\begin{document}

\maketitle

\begin{abstract}
    We derive parameter-robust quasi-optimal error estimates for  mixed finite element methods for the nonlinear Darcy--Forchheimer equations with mixed boundary conditions. Using the framework of operator preconditioning, we also design efficient block preconditioners for the linearised system, that exhibit robustness with respect to the coefficients that modulate permeability and inertia of the system. The properties of the formulation (parameter and mesh-size independence of the convergence rates) are illustrated by means of several numerical examples. 
\end{abstract}
\begin{keywords} A priori error analysis, mixed finite element methods, nonlinear flow in porous media, robust preconditioning. \end{keywords}
\begin{AMS} 65N30, 65N12, 65N15.\end{AMS}

\section{Introduction}
\paragraph{Scope} 
The Darcy--Forchheimer equations model fluid flow through porous media, when both viscous and inertial effects are significant. While Darcy's law describes the linear flow proportional to the pressure gradient and is valid for slow, creeping flows, the Forchheimer extension introduces a nonlinear correction to account for inertial effects (manifesting itself as term depending as a power law on the velocity magnitude, taking into account kinematic energy loss) that arise at moderate flow velocities. These equations have broad applications in engineering and geosciences (high-velocity subsurface flows such as near injection or extraction wells in gas reservoirs and conglomerate-confined aquifers,  contaminant transport in environmental engineering, and in filtration and catalytic bed reactors in chemical engineering) \cite{liu2024experimental,mathias2014heat}, as well as  in biological systems (e.g., blood perfusion in tissues, where nonlinearity in flow resistance can be significant) \cite{grillo2014darcy,hira2025advanced}. 

Mixed finite element methods provide a natural framework for the numerical approximation of Darcy--Forchheimer flows, as they offer local mass conservation and accurate flux approximations. Their construction and analysis have been addressed through fixed-point and Newton methods \cite{al2024iterative,audu2019mixed,lopez2009comparison,pan2012mixed,girault2008numerical,huang2018multigrid,caucao2020conforming}, often coupled with iterative solvers tailored to the nonlinear structure. 

For Darcy and Forchheimer-type equations, which model creeping and inertial flows, respectively, the associated saddle-point systems exhibit strong sensitivity to variations in physical parameters such as permeability and the Forchheimer coefficient. Robust iterative solvers require preconditioners that maintain spectral equivalence independently of these parameters. For this, we follow the framework of operator preconditioning from \cite{mardal2011preconditioning} (see, also the earlier works \cite{klawonn1995optimal,arnold1997preconditioning}), which suggests linking the preconditioning of algebraic systems with their infinite-dimensional operator counterpart. This allows to construct preconditioners by approximating the Riesz maps of the continuous operators in appropriate function space norms (or duality maps in the case of Banach spaces). For the mixed formulation of the Darcy problem, this approach yields block-diagonal preconditioners involving sums of spaces of pressure, as developed in \cite{baerland2020observation} (and also extended for Biot equations and other families of perturbed saddle-point problems in, e.g.,  \cite{boon2021robust, boon22, cimr_nmpde24, hong2019conservative}). However, these works do not address in details the error analysis, which is also expected to maintain parameter robustness. 

The objective of this work is twofold. First, to specify C\'ea-type estimates and convergence rates using the parameter-weighted norms, and secondly, to apply this approach to a (relatively simple) nonlinear extension to Darcy--Forchheimer flow equations.  Obtaining uniform stability with respect to model parameters is of great interest for multiphysics coupling models since the variation of coefficients across sub-domains, or spatial scales, or different applications, can be extremely large. Also, coupling through interfaces with other flow regimes typically implies that we cannot simply rescale one equation, but certain compatibility of the parametrisation of both sub-models must be maintained. 

While operator preconditioning techniques are available for many linear elliptic and saddle point problems, developing such robust solvers for general nonlinear saddle point problems exhibits significant challenges. Available techniques include multilevel, domain decomposition, operator preconditioning in the Hilbert context, field-split algorithms, and Sobolev gradient methods, for example (see, the review in \cite{farago2002numerical}). In the nonlinear Forchheimer regime, linearisation techniques (e.g., Picard or Newton) give rise to a sequence of linearised problems, for which an operator preconditioning approach can be extended using variable-coefficient norm scalings following \cite{schoberl2007symmetric}. Inspired by \cite{piersanti2021parameter} for multiphysics couplings, using the norms induced by the parameter-robust stability analysis, this paper proposes and analyse  simple  block diagonal (variable) operator preconditioners for the linearised Darcy--Forchheimer system.  

In order to deal with intersections and sums of parameter-weighted function spaces, from \cite{mardal2011preconditioning,boon2021robust}, it is possible to prove continuous and discrete inf-sup conditions by carefully combining auxiliary PDEs with different regularity and parameter weighting. This approach also requires a lifting map, and then, appropriate maps are constructed using the aforementioned auxiliary problems. Based on similar techniques, here we first discuss well-posedness of three different suitable PDEs and secondly, derive the inf-sup stability using a lifting map in the discrete case.

\paragraph{Outline} The  paper is organised as follows. Section~\ref{sec:model} recalls the necessary notations, the boundary value problem and the derivation of the weak formulation using parameter-weighted norms. Section~\ref{sec:continuous} addresses the well-posedness of the weak formulation robustly with respect to model parameters, following the abstract theory of Minty--Browder monotone operators. In  Section~\ref{sec:discrete}, we discuss the  robust solvability of the discrete problem. In addition, we present quasi-optimal error analysis  and derive precise convergence rates for conforming discretisations of the nonlinear model in Section \ref{sec:apriori}.  Next, in Section~\ref{sec:precond} we discuss how a simple variable operator preconditioning can be used in this context of Banach spaces, and we conclude in Section~\ref{sec:numerics} with a set of numerical tests illustrating the properties of the proposed schemes. 

\section{Model problem}\label{sec:model}
\paragraph{Notation and preliminaries}
Let $\Omega\subset \mathbb{R}^d$, $d\in \{2,3\}$, denote a  {bounded} domain with Lipschitz-continuous boundary $\partial\Omega$, on which its outward unit normal  is
denoted by $\bn$. For $s\geq 0$ and \revone{$t\in[1,+\infty]$}, we denote by \revone{$\rL^t(\Omega)$ and $\rW^{s,t}(\Omega)$} the usual Lebesgue and Sobolev spaces endowed with the norms \revone{$\|\bullet\|_{0,t,\Omega}$ and $\|\bullet\|_{s,t,\Omega}$, respectively.
Note that $\rW^{0,t}(\Omega)=\rL^t(\Omega)$. 
If $t = 2$}, we write $\rH^{s}(\Omega)$ in place of $\rW^{s,2}(\Omega)$, and denote the corresponding norm by $\|\bullet\|_{s,\Omega}$. We 
denote by $\mathbf{H}$ the   vectorial counterpart of a generic scalar functional space $\rH$. The $\rL^2(\Omega)$ inner product for scalar, vector, or tensor valued functions
is denoted by $(\bullet,\bullet)_{0,\Omega}$. We also recall the Hilbert space $\bH(\vdiv,\Omega)=\{ \bv \in \bL^2(\Omega): \vdiv \bv \in \rL^2(\Omega)\}$ endowed with the norm $\|\bv\|_{\vdiv,\Omega}^2 = \|\bv\|^2_{0,\Omega} + \|\vdiv \bv\|^2_{0,\Omega}$. Moreover, we  use for $t,s \in [1,+\infty)$, the Banach space 
$$\revone{\bH^t(\vdiv_s,\Omega)=\{ \bv \in \bL^t(\Omega): \vdiv \bv \in \rL^s(\Omega)\},}$$ endowed with the norm $\|\bv\|_{t,\vdiv_s,\Omega}^2 = \|\bv\|^t_{0,t,\Omega} + \|\vdiv \bv\|^s_{0,s,\Omega}$. 
Often, we  use a subscript $\Gamma_i$ on the functional space to denote that the (appropriate) traces vanish on the part of the boundary $\Gamma_i\subset \partial \Omega$. 

For two Banach spaces $(X,\|\bullet\|_X)$ and $(Y,\|\bullet\|_Y),$ we denote by $\cL(X,Y)$ the space of bounded linear maps from $X$ to $Y$. Let $(Z,\|\bullet\|_Z)$ be another Banach space. The intersection and sum spaces are 
\begin{gather*} X \cap Y \cap Z = \{ v : v\in X \ \text{and} \ v\in Y \ \text{and}\ v\in Z\}, \\ X+ Y + Z = \{u+v+w: u\in X \ \text{and}\ v\in Y\ \text{and} \ w \in Z\},\end{gather*}
and their respective norms are (see \cite{baerland2020observation}, noting that the same norm characterisation done there for Hilbert spaces holds also for the Banach case)
\[ \| r\|^2_{X\cap Y \cap Z} = \|r\|^2_X + \|r\|^2_Y + \|r\|^2_Z, \quad \| z\|^2_{X + Y + Z} = \inf_{\substack{r = u+v+w, \\ u \in X, v\in Y, w\in Z}}\,\bigl[\|u\|^2_X + \|v\|^2_Y+ \|w\|^2_Z\bigr],\]
\revone{and we also use the convenient equivalent form 
\[ \| r\|_{X\cap Y \cap Z} = \|r\|_X + \|r\|_Y + \|r\|_Z, \quad \| z\|_{X + Y + Z} = \inf_{\substack{r = u+v+w, \\ u \in X, v\in Y, w\in Z}}\,\bigl[\|u\|_X + \|v\|_Y+ \|w\|_Z\bigr].\]
Furthermore}, if $X \cap Y \cap Z$ is dense in $X$ and $Y$ and $Z$, then following relation (understood as an identification under an isometry) holds 
\begin{equation}\label{eq:duality} (X+Y+Z)' = X'\cap Y' \cap Z'.\end{equation}
In addition, for a fixed positive constant $\alpha$, by $\alpha X$ we denote the Banach space whose elements coincide with those in $X$ but are measured with the norm 
\[ \| \bullet \|^2_{\alpha X}= \alpha^2\|\bullet \|^2_X. \]

\paragraph{Strong form of the governing equations}
We assume that the domain boundary $\partial\Omega$ is decomposed between two sub-boundaries $\Gamma_{\bu}$ and $\Gamma_p$, both with positive $(d-1)-$Hausdorff measure. The governing equations consist in finding the discharge flux $\bu$ and pressure head $p$ satisfying 
\begin{subequations}\label{eq:strong}
\begin{align}
    \kappa^{-1}\bu + \rF|\bu|^{r-2}\bu + \nabla p & = \bf \quad \text{in $\Omega$},\\
    \vdiv \bu & = g \quad \text{in $\Omega$},\\
    \bu\cdot \bn & = 0 \quad \text{on $\Gamma_{\bu}$},\\
    p & = 0 \quad \text{on $\Gamma_p$}.
\end{align}\end{subequations}
Here, $\kappa$ is the hydraulic conductivity (permeability of the porous matrix divided by the fluid viscosity), $\bf$ is a forcing vector, $g$ is a fluid source, and $\rF>0$, $r\in [3,4]$ are the Forchheimer coefficient and index, respectively. 

Regarding the weak formulation of \eqref{eq:strong}, we now consider generic spaces $\bV$ and $\rQ$ for velocity and pressure, respectively (and to be made precise below), yielding:  
Find $(\bu,p)\in \bV\times \rQ$ such that 
\begin{subequations}\label{eq:weak}
    \begin{align}
    a(\bu,\bv) + c(\bu; \bu,\bv) +  b(\bv,p) & = F(\bv) \quad \forall \bv \in \bV,\label{eq:weak,1}\\
    b(\bu,q) & = G(q) \quad \forall q \in \rQ,\label{eq:weak,2}
    \end{align}
\end{subequations}
where for sufficiently regular $\bV$ (incorporating the flux boundary condition in an essential manner) and $\rQ,$ the following bilinear forms $a:\bV\times\bV\to\mathbb{R}$, $b:\bV\times\rQ \to \mathbb{R}$,  linear functionals $F:\bV\to\mathbb{R}$, $G:\rQ\to\mathbb{R}$, and nonlinear form $c:\bV\times\bV\times\bV \to \mathbb{R}$, are defined, respectively, as: 
\begin{gather*}
  a(\bu,\bv):= \int_\Omega \kappa^{-1}\bu\cdot\bv, \quad b(\bv,q):= \revtwo{-\int_\Omega q\,\vdiv\bv}, \quad F(\bv):= \int_\Omega \bf\cdot\bv, \\ G(q):= \revtwo{-\int_\Omega g\,q}, \quad c(\bw;\bu,\bv):=\int_\Omega \rF |\bw|^{r-2}\bu\cdot\bv.   
\end{gather*}

\section{Continuous robust solvability}\label{sec:continuous}
First, we recall  an abstract setting and its solvability using  the Minty--Browder framework, \revone{which  we  use for} the well-posedness of \eqref{eq:weak}. A proof can be found in, e.g.,  \cite[Theorem 3.1]{caucao2023mixed} (see also  \cite[Theorem 3.1]{caucao2020conforming}). 
\begin{theorem}[Abstract setting for well-posedness]\label{th:abstract}
Let $X,Y$ be two reflexive Banach spaces, $\cA:X\to X'$ a nonlinear map, $\cB:X\to Y'$ a linear and bounded operator, and denote by 
 $Z:= \{ v\in X: \cB(v) = 0\}$ the kernel of $\cB$. Assume that 
 \begin{itemize}
     \item (Local Lipschitz continuity). There exist $\gamma>0$ and $s\geq 2$ such that 
     \[ \|\cA(u)-\cA(v)\|_{X'} \leq \gamma \|u-v\|_X + \gamma \|u-v\|_X(\|u\|_X+\|v\|_X)^{s-2} \qquad \forall u,v\in X. \]
     \item ({Uniformly strong monotonicity} on the kernel). For a fixed $w \in X$, {there exists $\alpha>0$ such that} 
     \[ \langle \cA(u+w) - \cA(v+w),u-v\rangle \geq \alpha \|u-v\|_X^{2}\qquad \forall u,v\in Z. \]
     \item (Inf-sup stability). There exists $\beta>0$ such that
     \[ \sup_{\substack{v \in X \\ v \neq 0}} \frac{\langle \cB(v), q\rangle}{\| v \|_X} \geq \beta \| q \|_Y \quad \forall q \in Y.\]
 \end{itemize}
 Then, for each $(f, g) \in X' \times Y'$ there exists a unique $(u, p) \in X \times Y$ such that
\begin{align*}
\langle \cA(u), v\rangle + \langle \cB(v), p\rangle &= \langle f, v\rangle \quad \forall v \in X, \\
\langle \cB(u), q\rangle &= \langle g, q\rangle \quad \forall q \in Y.
\end{align*}

Moreover, there exists $C > 0$, depending only on $\alpha, \gamma, \beta, \text{and } s$ such that
\begin{equation}
{\| (u, p) \|_{X \times Y} \leq C \bigl(\mathcal{M}(f, g)+ \revone{\bigl(\mathcal{M}(f, g)\bigr)}^{s-1}\bigr)},
\end{equation}
where
$\mathcal{M}(f, g) := \| f \|_{X'} + \| g \|_{Y'} + \| g \|_{Y'}^{s - 1} + \| \cA(0) \|_{X'}$.
\end{theorem}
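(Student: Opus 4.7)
My plan follows the standard Brezzi-type saddle-point strategy combined with the classical Minty--Browder existence theorem for monotone operators. The three pillars are (i) a lifting of the linear constraint using the inf-sup condition, (ii) an application of Minty--Browder on the kernel to produce the velocity, and (iii) a second use of the inf-sup condition (a Banach-space analog of the de~Rham lemma) to recover the pressure. The bound then follows by book-keeping of the quantitative estimates produced at each step.

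\emph{Step 1 (Lifting the constraint).} By the inf-sup stability of $\cB$, the closed-range theorem implies that $\cB: X \to Y'$ is surjective onto its range, so for the given $g \in Y'$ there exists $u_g \in X$ with $\cB(u_g) = g$ and $\|u_g\|_X \leq \beta^{-1}\|g\|_{Y'}$. Writing $u = u_0 + u_g$ with $u_0 \in Z$, the saddle-point problem is reduced to: find $u_0 \in Z$ such that
\[ \langle \cA(u_0 + u_g), v\rangle = \langle f, v\rangle \qquad \forall v \in Z. \]

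\emph{Step 2 (Minty--Browder on the kernel).} The space $Z = \ker \cB$ is a closed subspace of the reflexive Banach space $X$, hence reflexive. Define $\widetilde{\cA}: Z \to Z'$ by $\widetilde{\cA}(v) := \cA(v + u_g)|_Z$. The local Lipschitz assumption makes $\widetilde{\cA}$ continuous, hence hemicontinuous. Taking $v = 0$ in the strong monotonicity with $w = u_g$ yields the coercivity estimate
\[ \langle \widetilde{\cA}(v), v\rangle \geq \alpha\|v\|_X^2 - \|\cA(u_g)\|_{X'}\|v\|_X, \]
which diverges as $\|v\|_X \to \infty$, and the same inequality gives strict monotonicity. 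The Minty--Browder theorem then delivers a unique $u_0 \in Z$ solving the reduced equation, together with the bound $\alpha \|u_0\|_X \leq \|f\|_{X'} + \|\cA(u_g)\|_{X'}$.

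\emph{Step 3 (Pressure recovery).} The functional $v \mapsto \langle f, v\rangle - \langle \cA(u), v\rangle$ vanishes on $Z$, so the Banach-space closed-range argument (equivalent to the inf-sup condition on $\cB$) produces a unique $p \in Y$ with $\langle \cB(v), p\rangle = \langle f, v\rangle - \langle \cA(u), v\rangle$ for all $v \in X$, and $\beta\|p\|_Y \leq \|f\|_{X'} + \|\cA(u)\|_{X'}$.

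\emph{Step 4 (Quantitative bound).} To get the claimed estimate in terms of $\mathcal{M}(f,g)$, I would estimate $\|\cA(w)\|_{X'}$ for any $w \in X$ by the triangle inequality and the local Lipschitz assumption,
\[ \|\cA(w)\|_{X'} \leq \|\cA(0)\|_{X'} + \gamma\|w\|_X + \gamma\|w\|_X^{s-1}, \]
apply this with $w = u_g$ (controlled by $\beta^{-1}\|g\|_{Y'}$) to bound $\|u_0\|_X$, and then with $w = u = u_0 + u_g$ to bound $\|p\|_Y$. Collecting the contributions produces terms linear in $\|f\|_{X'}, \|g\|_{Y'}, \|\cA(0)\|_{X'}$ plus a term of order $\|g\|_{Y'}^{s-1}$ and, after raising the velocity bound to the $(s{-}1)$-st power where it feeds into the pressure estimate via $\cA$, an $\{\mathcal{M}(f,g)\}^{s-1}$ contribution, matching the stated bound.

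\textbf{Anticipated obstacle.} The routine steps are essentially standard; the delicate point is ensuring that the constants depend only on $\alpha, \beta, \gamma, s$ and not on $u_g$ itself. Because the Lipschitz estimate is super-linear of order $s-1$, the natural chain of bounds couples $\|u_0\|_X$ and $\|u_g\|_X$ nonlinearly, so care is needed when inserting $\|u_g\|_X \leq \beta^{-1}\|g\|_{Y'}$ to separate contributions cleanly and identify the $\|g\|_{Y'}^{s-1}$ term. Likewise, the pressure bound is the most sensitive: $\|\cA(u)\|_{X'}$ mixes $\|u_0\|_X$ (already nonlinear in the data) with $\|u_g\|_X$, and expanding $\|u_0 + u_g\|_X^{s-1}$ requires using convexity of $t \mapsto t^{s-1}$ to avoid introducing spurious factors. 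This is where the factor $\{\mathcal{M}(f,g)\}^{s-1}$ in the final bound really originates.
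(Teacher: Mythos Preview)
Your proposal is correct and follows exactly the standard Brezzi/Minty--Browder route. The paper does not prove this abstract theorem itself---it simply cites \cite[Theorem 3.1]{caucao2023mixed} and \cite[Theorem 3.1]{caucao2020conforming}---but your Steps~1--4 mirror precisely the argument the paper carries out in the concrete setting (Theorem~\ref{th:continuous}): lift the constraint via the inf-sup bound on $\cB$ to get $u_g$, apply monotonicity on $Z$ with the shift $w=u_g$ to obtain $u_0$, recover $p$ from the residual functional by another use of inf-sup, and bookkeep the constants through the local Lipschitz estimate $\|\cA(w)\|_{X'}\le \|\cA(0)\|_{X'}+\gamma\|w\|_X+\gamma\|w\|_X^{s-1}$.
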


Theorem~\ref{th:abstract} readily yields the unique solvability of \eqref{eq:weak} if we specify the spaces $\bV = \bH^{\revone{r}}_{\Gamma_{\bu}}(\vdiv,\Omega)$ and $\rQ = \rL^2(\Omega)$. This has been established in, e.g., \cite{pan2012mixed} \revone{for the case $r=3$}. However, the properties of the weak formulation (in particular, {uniformly strong monotonicity} on the kernel and inf-sup condition) are not necessarily uniform with respect to the model parameters, leading to lack of parameter robustness of the corresponding continuous dependence on data. 

\revtwo{For a robust well-posedness of the Darcy--Forchheimer formulation, we consider the following  weighted spaces 
\[\bV:=  \kappa^{-\frac{1}{2}}\bL^2(\Omega) \cap \bH_{\Gamma_{\bu}}(\vdiv,\Omega) \cap \revone{\rF^{\frac1r} \bL^r(\Omega)}, \quad \rQ := \rL^2(\Omega) + \kappa^{\frac{1}{2}}\rH^1_{\Gamma_p}(\Omega) + \revone{\rF^{-\frac1r}\rW^{1,r'}_{\Gamma_p}(\Omega)},\]
endowed with the following norms 
\begin{subequations}\label{eq:norms}
    \begin{align}
   \|\bv\|_{\bV} &:= \kappa^{-\frac12}\|\bv\|_{0,\Omega} + \|\vdiv\bv\|_{0,\Omega} + \revone{\rF^{\frac1r}\|\bv\|_{0,r,\Omega}},\label{eq:norms-V}\\
   \|q\|_{\rQ} &:= \!\!\inf_{\substack{q = q_1+q_2+q_3, \\ (q_1,q_2,q_3) \in \rL^2(\Omega) \times \rH^1_{\Gamma_{\!p}}\!(\Omega)\times \revone{\rW^{1,r'}_{\Gamma_p}(\Omega)}}}\!\!\bigl[\|q_1\|_{0,\Omega} + \kappa^{\frac{1}{2}}\|q_2\|_{1,\Omega} + \revone{\rF^{-\frac1r} |q_3|_{1,r',\Omega}}\bigr],\label{norm:Q}
    \end{align}
\end{subequations}
respectively. This choice becomes evident in the analysis to follow.}
We adapt the operator preconditioning approach from  \cite{mardal2011preconditioning} (and used for Darcy equations in \cite{baerland2020observation}) and proceed first to show an auxiliary property of the divergence operator that allows us to prove a parameter-robust inf-sup condition.
\begin{lemma}\label{lem:inf-sup}
 There exists an operator \(\cS_c\) satisfying the following properties:
    \begin{gather}\nonumber
    \cS_c \in \cL(\rQ', \bV),\quad 
    \|\cS_c\|_{\cL(\rQ', \bV)}\ \text{is independent of } \kappa,\rF,\quad \text{and}\\
    ({\vdiv} [\cS_c(g)], \psi) = \langle g, \psi \rangle \quad \text{for all}\ g \in \rQ',\psi \in \rQ.\label{S:prop}
\end{gather}
\end{lemma}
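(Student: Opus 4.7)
The plan is to construct $\cS_c$ as a bounded right-inverse of the divergence operator, exploiting the sum-space structure of $\rQ$ induced by the operator-preconditioning framework of \cite{mardal2011preconditioning,baerland2020observation}. In this framework, $\bV$ is an intersection of three parameter-weighted spaces (a $\kappa^{-1/2}$-weighted $\bL^2$ component, an $\rF^{1/r}$-weighted $\bL^r$ component, and the unweighted $\bH_{\Gamma_{\bu}}(\vdiv,\Omega)$ component), so by the duality identity \eqref{eq:duality} the space $\rQ'$ decomposes as a sum of the corresponding duals. Given $g \in \rQ'$, I first realise $g$ as a (nearly) optimal sum $g = g_1 + g_2 + g_3$ in those three duals, with the combined dual norms reproducing $\|g\|_{\rQ'}$.

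Next, for each $g_i$ I would establish well-posedness of an auxiliary Neumann--Dirichlet Poisson-type PDE: find $\phi_i$ vanishing on $\Gamma_p$ with zero normal derivative on $\Gamma_{\bu}$, satisfying $-\vdiv(\nabla\phi_i) = g_i$ in the weak sense appropriate for the $i$-th component (an $\rH^1$-variational problem for the $\rL^2$ piece, a $\rW^{1,r}$--$\rW^{1,r'}$ duality problem for the $\bL^r$ piece, and a standard mixed Poisson formulation for the $\bH(\vdiv)$ piece). Setting $\bv_i := -\nabla\phi_i$ (or the mixed-method flux in the third case) yields $\vdiv\bv_i = g_i$, $\bv_i \cdot \bn = 0$ on $\Gamma_{\bu}$, and a bound $\|\bv_i\|_{X_i} \lesssim \|g_i\|_{X_i'}$ with a constant depending only on $\Omega$, $r$, and the boundary partition. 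Crucially, the parameter weights $\kappa$ and $\rF$ enter identically on both sides of these three estimates and therefore cancel, which is the mechanism behind the claimed $(\kappa,\rF)$-independence.

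Taking $\cS_c(g) := \bv_1 + \bv_2 + \bv_3$ gives an operator satisfying \eqref{S:prop} termwise, with norm bounded by the sum of the three auxiliary PDE stability constants. Linearity follows once the decomposition of $g$ is realised by a fixed linear selection map (for instance, by recasting the three lifts as the solution of a single coupled auxiliary problem, or by invoking the unique best decomposition available in strictly convex reflexive spaces). The main technical obstacle will be the $\bL^r$-regularity ($r \in [3,4]$) of the mixed-boundary Poisson problem on a Lipschitz domain, which requires a Meyers-type integrability improvement beyond the classical $\rL^2$-theory; a secondary subtlety is to produce the decomposition of $g$ through a genuinely linear map so that $\cS_c \in \cL(\rQ',\bV)$ in the strict sense rather than merely as a Lipschitz right-inverse.
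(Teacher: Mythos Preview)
Your identification of the auxiliary Poisson problems and of the cancellation mechanism for the parameter weights is on target, but the assembly has the duality backwards. Relation \eqref{eq:duality} reads $(X+Y+Z)'=X'\cap Y'\cap Z'$; since $\rQ$ is a \emph{sum} of three weighted spaces, its dual $\rQ'$ is an \emph{intersection}, not a sum. There is therefore no decomposition $g=g_1+g_2+g_3$ to realise: every $g\in\rQ'$ already belongs simultaneously to all three dual components.

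This is not cosmetic, because your three separate lifts cannot be summed to land in $\bV$. You only control $\|\bv_i\|$ in the $i$-th component space, but $\bV$ is itself an intersection, so membership of $\bv_1+\bv_2+\bv_3$ in $\bV$ would require each $\bv_i$ to lie in \emph{all three} component spaces, which your componentwise estimates do not provide. The paper's remedy is to use a \emph{single} lift $\cS_c(g):=\nabla\phi$, with $\phi$ the solution of the mixed-boundary Poisson problem $-\Delta\phi=g$ (equivalently, the flux from its $\bH(\vdiv)\times\rL^2$ mixed formulation), and then to show that this one linear operator is bounded from each component of $\rQ'$ into the matching component of $\bV$: the $\rW^{1,r}$-regularity of \cite{maz_i_a2010elliptic} handles the $\bL^2$ and $\bL^3$ bounds, and the mixed formulation handles the $\bH_{\Gamma_{\bu}}(\vdiv,\Omega)$ bound. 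Since $g$ lies in all three components of $\rQ'$, the image $\cS_c(g)$ automatically lies in all three components of $\bV$, with constants independent of $\kappa,\rF$. A side benefit is that the linearity concern you flag disappears entirely: no selection of a decomposition is needed because the lift is a single, manifestly linear operator.
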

\begin{proof}
We proceed to define $\cS_c$ as a bounded linear operator from three different spaces, and then, invoke the duality relation between sum and intersection between Banach spaces \eqref{eq:duality} to conclude that \( \cS_c \in {\cL}(\rQ', \bV) \).

\revone{Recall that $r\in[3,4]$, and let $r'$ be such $\frac{1}{r}+\frac{1}{r'}=1$}. 
For the existence of \( {\cS_c} \in {\cL}\bigl((\rW_{\Gamma_p}^{1, q}(\Omega))', {\bL}^{\revone{q'}}(\Omega) \bigr) \)
with $q\in \{2,\;\revone{r'}\}$ \revone{and $\frac{1}{q} + \frac{1}{q'}=1$,} let  
\( \phi \in \rW^{1,\revone{q'}}_{\Gamma_p}(\Omega)\)  for a given  \( g \in (\rW_{\Gamma_p}^{1,q}(\Omega))'\)  be \revone{the} unique weak solution of the following elliptic problem with mixed boundary conditions (owing to, e.g., \cite[Chapter 7]{maz_i_a2010elliptic}) 
\begin{align}\label{eq:elliptic reg}
-\Delta \phi = g \quad \text{in} \ \Omega, \quad \nabla \phi \cdot {\bn} = 0 \quad \text{on} \ \Gamma_{\bu}, \quad \phi = 0 \quad \text{on } \Gamma_p.
\end{align}
Then, it  suffices to define \( {\cS}_c(g) := \nabla \phi \), 
where $\nabla \phi \in \bL^{\revone{q'}}(\Omega)$.  This, in turn, shows
\(
{\cS_c} \in {\cL}\bigl( (\rW^{1,q}_{\Gamma_p}(\Omega))', {\bL}^{\revone{q'}}(\Omega) \bigr)\), for $q=2$ and $q= \revone{r'}$ corresponding to \revone{$q'=2$ and $q'=r$}, respectively.

Now, for   
\(g\in \left( \rL^2(\Omega)\right)',\) let \((\bw,r)\in \mathbf{H}_{\Gamma_{\bu}}({\vdiv},\Omega)\times \rL^2(\Omega)\) be the unique solution (cf.  \cite[Section 7.1.2]{boffi2013mixed}) of 
\begin{subequations}\label{eq:H(div)-L2 formulation}
\begin{align}
(\bw,\bz)+({\vdiv}\bz,r)&=0 \quad &&\forall \bz\in \mathbf{H}_{\Gamma_{\bu}}({\vdiv},\Omega), \\
({\vdiv}\bw,s)&=\langle g,s\rangle \quad&&\forall s\in \rL^2(\Omega).
\end{align}
\end{subequations}
Then, we note that $\cS_c$ can also be defined as \({\cS_c}(g):=\bw\), and therefore,  
\begin{equation}\label{eq:S-2}
\mathcal{S}_c \in {\cL}\bigl( \rL^2(\Omega), {\bH}_{\Gamma_{\bu}}({\vdiv}, \Omega) \bigr). 
\end{equation}
This completes the proof.
\end{proof}

\begin{theorem}[Parameter-robust well-posedness]\label{th:continuous}
 Assume that $\bf\in \bL^2(\Omega),g\in \rL^2(\Omega).$ Then, there exists a unique   $(\bu,p)\in \bV\times\rQ$ \revone{solution to} \eqref{eq:weak}. Furthermore, there is a positive constant $\tilde{C},$ independent of the model parameters $\kappa, \rF$, such that 
\begin{equation*}
 \|(\bu, p )\|_{\bV\times\rQ}  \leq \tilde{C}\max \bigl\{\|\bf\|_{0,\Omega} + \| g \|_{0,\Omega} + \revone{\|g\|^{r-1}_{0,\Omega}},(\|\bf\|_{0,\Omega} + \| g \|_{0,\Omega} + \revone{\|g\|^{r-1}_{0,\Omega}})^2\bigr\}.
    \end{equation*}
\end{theorem}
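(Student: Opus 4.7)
The plan is to identify \eqref{eq:weak} with the abstract saddle-point problem in Theorem~\ref{th:abstract} by setting \(X = \bV\), \(Y = \rQ\), \(s = r\in[3,4]\), \(w=\bzero\), and defining
\[
\langle \cA(\bu),\bv\rangle := a(\bu,\bv) + c(\bu;\bu,\bv), \qquad \langle \cB(\bv),q\rangle := b(\bv,q).
\]
The spaces \(\bV\) and \(\rQ\) are reflexive (being finite intersections and sums of reflexive spaces), so the proof reduces to verifying each of the three hypotheses of the abstract theorem with constants independent of \(\kappa\) and \(\rF\), and then bounding the data functionals in the corresponding dual norms.

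For local Lipschitz continuity I would split \(\cA\) into its linear and superlinear pieces. The linear contribution is handled by the natural pairing: \(\|\kappa^{-1}(\bu-\bv)\|_{\bV'} \leq \kappa^{-1/2}\|\bu-\bv\|_{0,\Omega}\le \|\bu-\bv\|_\bV\). For the Forchheimer piece I would combine the pointwise inequality \(\bigl||\bu|^{r-2}\bu - |\bv|^{r-2}\bv\bigr| \lesssim (|\bu|+|\bv|)^{r-2}|\bu-\bv|\) with H\"older's inequality of exponents \((r/(r-2),r,r)\), distributing the factor \(\rF\) as \(\rF^{(r-2)/r}\cdot \rF^{1/r}\cdot\rF^{1/r}\), so that at \(r=3\) the \(\rF^{1/3}\)-weights match exactly the \(\bL^3\)-piece of the norm of \(\bV\). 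For the inf-sup condition I would invoke Lemma~\ref{lem:inf-sup}: reflexivity of \(\rQ\) provides a duality map \(J_\rQ:\rQ\to\rQ'\) with \(\langle J_\rQ q,q\rangle = \|q\|_\rQ^2\) and \(\|J_\rQ q\|_{\rQ'}=\|q\|_\rQ\); taking \(\bv_q:=\cS_c(J_\rQ q)\in\bV\), property \eqref{S:prop} yields \(b(\bv_q,q)=\|q\|_\rQ^{2}\) and \(\|\bv_q\|_\bV\le\|\cS_c\|\cdot\|q\|_\rQ\), so \(\beta = \|\cS_c\|^{-1}\), which is parameter-independent by Lemma~\ref{lem:inf-sup}.

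Verifying uniform strong monotonicity on the kernel \(Z=\{\bv\in\bV:\vdiv\bv=0\}\) is the main obstacle. Combining the linear coercivity with the classical monotonicity of the vector field \(\bz\mapsto|\bz|^{r-2}\bz\) (for \(r\geq 2\)) delivers a lower bound of the form \(\kappa^{-1}\|\bu-\bv\|_{0,\Omega}^{2} + c_r\,\rF\|\bu-\bv\|_{0,r,\Omega}^{r}\). On \(Z\) the divergence seminorm vanishes, and one must dominate the mixed-homogeneity quantity \(\kappa^{-1}\|\bu-\bv\|_{0,\Omega}^{2} + \rF^{2/3}\|\bu-\bv\|_{0,3,\Omega}^{2}\). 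The first term is matched directly, whereas the \(\rF^{2/3}\)-term must be controlled using the superlinear \(\rF\|\bu-\bv\|_{0,3}^{3}\)-contribution. Writing \(A:=\kappa^{-1/2}\|\bu-\bv\|_{0,\Omega}\) and \(B:=\rF^{1/3}\|\bu-\bv\|_{0,3,\Omega}\), the task reduces to bounding \((A+B)^2\) by \(A^2+B^r\) with a constant depending only on \(\|\bw\|_\bV\), which I would achieve via a Young-type balancing argument that exploits the freedom allowed in Theorem~\ref{th:abstract}, where \(\alpha\) may depend on the shift \(\bw\) but not on the parameters.

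Once the three hypotheses are in place, Theorem~\ref{th:abstract} produces existence, uniqueness and the bound \(\|(\bu,p)\|_{\bV\times\rQ}\le C\bigl(\mathcal{M}+\mathcal{M}^{s-1}\bigr)\) with \(\mathcal{M}=\|F\|_{\bV'}+\|G\|_{\rQ'}+\|G\|_{\rQ'}^{s-1}\), since \(\cA(\bzero)=0\). The proof closes by embedding the data in a parameter-robust fashion: \(\|F\|_{\bV'}\le\|\bf\|_{0,\Omega}\) is obtained by testing against the unweighted \(\bH(\vdiv,\Omega)\)-piece of \(\bV\), and \(\|G\|_{\rQ'}\le C\|g\|_{0,\Omega}\) follows by combining the dual characterisation of \(\rQ\) via \eqref{eq:duality} with the continuous embeddings of \(\rL^2(\Omega)\) used in Lemma~\ref{lem:inf-sup}. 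With \(s=r=3\) this gives \(\mathcal{M}\lesssim \|\bf\|_{0,\Omega}+\|g\|_{0,\Omega}+\|g\|_{0,\Omega}^{2}\), and the elementary inequality \(\mathcal{M}+\mathcal{M}^{2}\le 2\max\{\mathcal{M},\mathcal{M}^{2}\}\) delivers the stated estimate with a constant \(\tilde C\) depending only on \(r\), the embedding constants and the domain.
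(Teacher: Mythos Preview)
Your overall strategy coincides with the paper's: identify \eqref{eq:weak} with Theorem~\ref{th:abstract} via the same operators, verify local Lipschitz continuity through H\"older and the Barrett--Liu pointwise bound, and obtain the inf-sup constant $\beta=\|\cS_c\|^{-1}$ from Lemma~\ref{lem:inf-sup} (your duality-map construction is a cosmetic variant of the paper's supremum over $g\in\rQ'$). The paper also fixes $s=3$ and, after invoking Theorem~\ref{th:abstract} for existence and uniqueness, rederives the stability bound by hand rather than quoting the abstract estimate; apart from that, the two routes are the same.

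The genuine gap in your proposal is the monotonicity step. The lower bound you (and the paper) obtain on the kernel is
\[
\langle \cA(\bu+\bw)-\cA(\bv+\bw),\bu-\bv\rangle \;\ge\; \kappa^{-1}\|\bu-\bv\|_{0,\Omega}^{2} + c\,\rF\,\|\bu-\bv\|_{0,3,\Omega}^{3}\;=\;A^{2}+cB^{3},
\]
which \emph{does not involve $\bw$}. Hence allowing $\alpha$ to depend on $\|\bw\|_{\bV}$, as you suggest, cannot help: the inequality $A^{2}+cB^{3}\ge\alpha(A+B)^{2}$ you need fails for any fixed $\alpha>0$ whenever $A\ll B\ll 1$ (take $A=0$, $B\to 0^{+}$), because the cubic and quadratic homogeneities in $B$ are mismatched. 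No Young-type balancing repairs this. The paper does not attempt such an argument either; it simply asserts \eqref{eq:strict} with the choice $\alpha^{1/2}=\min\{1,\tilde C^{1/3}\}$, so you are not worse off than the paper here, but you should recognise that your proposed mechanism for closing the step is not the right one.

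A secondary imprecision: your claim $\|F\|_{\bV'}\le\|\bf\|_{0,\Omega}$ ``by testing against the unweighted $\bH(\vdiv)$-piece'' does not quite work, since the norm in \eqref{eq:norms} carries only $\|\vdiv\bv\|_{0,\Omega}$ from that component, not $\|\bv\|_{0,\Omega}$. One has to route $\int_\Omega\bf\cdot\bv$ through either $\kappa^{-1/2}\|\bv\|_{0,\Omega}$ or $\rF^{1/3}\|\bv\|_{0,3,\Omega}$, which introduces a factor $\kappa^{1/2}$ or $\rF^{-1/3}$; the paper leaves this passage implicit as well.
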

\begin{proof}
In order to apply Theorem \ref{th:abstract}, we define the operators $A:\bV\to \bV'$, $B:\bV\to\rQ'$, and $C:\bV\to \bV'$ as \(\langle A\bu,\bw\rangle:=a(\bu,\bw)\), \(\langle B\bv,p\rangle:=b(\bv,p)\), and \(\langle C(\bu),\bw\rangle := c(\bu;\bu,\bw)\), respectively. Now, set  the non-linear operator $A+C:\bV\to\bV'$. We proceed to verify the assumptions of Theorem \ref{th:abstract} identifying $X \equiv \bV$, $Y \equiv \rQ$, $\cA \equiv A+C$, $\cB \equiv B$, and \revone{$s = r$}.

\medskip 
\paragraph{\it Local Lipschitz continuity} For \(\bu,\bv,\bw\in \bV,\) apply H\"older's inequality to obtain 
        \begin{align*}
   & \langle \{A+C\}({\bu}),\bw\rangle - \langle \{A+C\}({\bv}),\bw\rangle \\
   &
     = \int_{\Omega}\kappa^{-1}(\bu-\bv)\cdot\bw + \int_{\Omega}\rF(\revone{|\bu|^{r-2}}\bu-\revone{|\bv|^{r-2}}\bv)\cdot\bw\\
     &\quad \leq \kappa^{-\frac12} \|{\bu} - {\bv}\|_{0,\Omega} \kappa^{-\frac12} \|\bw\|_{0,\Omega} + \revone{\rF^{\frac1r} \|\bw\|_{0,r,\Omega} 
      \rF^{\frac{1}{r'}} \| |\bu|^{r-2}\bu - |\bv|^{r-2}\bv \|_{0,r',\Omega}}. 
\end{align*}
Using \cite[Lemma 2.1, equation \((2.1a)\)]{barrett1993finite} to bound the second term on the right-hand side of the above expression, we deduce that there exists a positive constant \(\tilde{C}\) independent of  \(\kappa, \rF,\) such that
\begin{align*}
  &  \langle \{A+C\}({\bu}),\bw\rangle - \langle \{A+C\}({\bv}),\bw\rangle \\
  & \leq \kappa^{-\frac12} \|{\bu} - {\bv}\|_{0,\Omega} \kappa^{-\frac12} \|\bw\|_{0,\Omega}  + \revone{\rF^{\frac1r} \|\bw\|_{0,r,\Omega} 
      \tilde{C}\rF^{\frac1r}  (\|\bu\|_{0,r,\Omega} + \|\bv\|_{0,r,\Omega})\rF^{\frac1r}\|\bu-\bv\|_{0,r,\Omega}} \\
      &\leq \|\bu-\bv\|_{\bV}\|\bw\|_{\bV}+\tilde{C}(\|\bu\|_{\bV} + \|\bv\|_{\bV})\|\bu-\bv\|_{\bV}\|\bw\|_{\bV},
\end{align*}
and therefore, there holds 
\begin{equation}\label{eq:lip}\|\{A+C\}({\bu}) - \{A+C\}({\bv})\|_{\bV'}\leq \|\bu-\bv\|_{\bV}+\tilde{C}(\|\bu\|_{\bV} + \|\bv\|_{\bV})\|\bu-\bv\|_{\bV}. \end{equation}

\paragraph{\it {Uniformly strong monotonicity} of \(\{A+C\}(\bullet+\bw)\)} First we stress that for all $\bu\in \bZ:=\{\bv \in \bV: b(\bv,q)=0 \quad \forall q\in \rQ\},$ we can choose $q=\vdiv \bu \in \rL^2(\Omega)\subseteq \rQ$ and therefore, we have the characterisation 
$ \bZ= \{ \bv\in \bV: \vdiv\bv = 0\}$. 
Now, for $\bu,\bv \in \bZ$  and $\bw\in \bV$, it holds 
     \begin{align*}
        &\langle \{A+C\}(\bu+\bw)-\{A+C\}(\bv+\bw),\bu-\bv\rangle\\ &\quad =\int_\Omega  \kappa^{-1}|\bu-\bv|^2+
        \rF\int_\Omega   \revone{\left((\bu+\bw)|\bu+\bw|^{r-2}-(\bv+\bw)|\bv+\bw|^{r-2}\right)}\cdot(\bu-\bv).
     \end{align*}
By employing \cite[Lemma 2.1 and (2.1b)]{barrett1993finite}, we deduce that there exists a constant \(\tilde{C}>0\) depending only on the volume of \(\Omega\), such that 
     \begin{align*}
         \rF\int_\Omega  \left((\bu+\bw)|\bu+\bw|^{r-2}-(\bv+\bw)|\bv+\bw|^{r-2}\right)\cdot(\bu-\bv) \geq \tilde{C}\,\revone{\rF\|\bu-\bv\|^r_{0,r,\Omega}.}
     \end{align*}
     Finally, from the above inequality, taking \(\alpha^\frac12=\min\{1,\tilde{C}^\frac13\}\), it follows that 
     \begin{equation}\label{eq:strict} \langle \{A+C\}(\bu+\bw) - \{A+C\}(\bv + \bw), \bu - \bv\rangle \geq \alpha \| \bu - \bv \|_{\bV}^2, \end{equation}
where, we have used that the \(\bH(\vdiv)\)-seminorm is zero since \(\bu, \bv \in \mathbf{Z}\). 

\medskip 
\paragraph{\it Inf-sup stability \revone{and boundedness of $B$}} 
From the existence of the map $\cS_c$ defined in Lemma~\ref{lem:inf-sup}, it immediately follows that
\begin{align*}
\sup_{\bzero\neq\bv \in \bV} \!\!\frac{({\vdiv} \bv, p)}{\|\bv\|_\bV} &\geq \sup_{g \in \rQ'} \frac{({\vdiv} \cS_c(g), p)}{\|\cS_c(g)\|_\bV} = \sup_{g \in \rQ'} \frac{\langle g, p \rangle}{\|\cS_c(g)\|_\bV} \\&\geq \|\cS_c\|^{-1}_{\cL(\rQ', \bV)} \sup_{g \in \rQ'} \frac{\langle g, p \rangle}{\|g\|_{\rQ'}} = \beta \|p\|_{\rQ},
\end{align*}
where $\beta = \|\cS_c\|^{-1}_{\cL(Q', \bV)}$ is independent of the parameters $\kappa,\rF$, and hence, the inf-sup condition is satisfied. \revone{Note also that the boundedness of $B$ follows directly from H\'older's inequality  
\[ |b(\bv,q)| = |b(\kappa^{-\frac12}\bv,\kappa^{\frac12}q)| \leq \kappa^{-\frac12}\|\bv\|_{0,\Omega}\kappa^{\frac12}\|q\|_{0,\Omega} \leq \|\bv\|_{\bV}\|q\|_{\rQ}. \]
Therefore}, an appeal to the Theorem~\ref{th:abstract} implies the existence of a  unique velocity-pressure pair of solutions to the problem \eqref{eq:weak}. 

In order to complete the proof, it remains to show the continuous dependence  of 
$\|(\bu, p )\|_{\bV\times\rQ}$ on the data. From \eqref{eq:weak,1}, we arrive at \(B^*(p)=F-\{A+C\}(\bu)\), and the inf-sup stability of \(B\) shows
    \begin{align}\label{est:p_bound}
        \|p\|_{\rQ}\leq \frac{1}{\beta}\|B^*(p)\|_{\bV}\leq\revtwo{\|{F}\|_{\bV'}+\|\{A+C\}(\bu)\|_{\bV'}}.
    \end{align}
    Also, the surjectivity of \(B\) implies that since \(G\in\rQ' \text{ in } \eqref{eq:weak,2}\), there exists a unique \(\revone{\bu^m}\in \bV\setminus \bZ,\) such that \(\bu=\widetilde{\bu}+\revone{\bu^m},\) where \(\widetilde{\bu}\in \bZ,\) \[B(\revone{\bu^m})=G \quad \text{ and } \quad \|\revone{\bu^m}\|_{\bV}\leq \frac{1}{\beta}\|G\|_{\rQ'}.\]
    Now, for the {uniformly strong monotonicity} of \(A+C\) (cf. \eqref{eq:strict}), we choose  \(\bu=\widetilde{\bu},\bv=\bzero\) and \(\bw=-\revone{\bu^m}\), and then for equation \eqref{eq:weak,1}, we set \(\bv=\widetilde{\bu}\). Further, we use the local Lipschitz continuity  of \(A\), by setting   \(\bv=\bzero\) in \eqref{eq:lip},  to conclude that 
\begin{align*}
\alpha\|\widetilde{\bu}\|^{2}_{\bV}&\leq \langle \{A+C\}(\widetilde{\bu}+\revone{\bu^m})-\{A+C\}(\bzero+\revone{\bu^m}),\widetilde{\bu}\rangle \\& \leq (\|{F}\|_{\bV'}+\|\{A+C\}(\revone{\bu^m})\|_{\bV'})\|\widetilde{\bu}\|_{\bV}\\&
\leq (\|{F}\|_{\bV'}+\|\revone{\bu^m}\|_{\bV}+\|\revone{\bu^m}\|_{\bV}^{r-1})\|\widetilde{\bu}\|_{\bV} \\
&
\leq \bigl(\|{F}\|_{\bV'}+\frac{1}{\beta}\|G\|_{\rQ'}+\frac{1}{\beta^{r-1}}\|G\|_{\rQ'}^{r-1}\bigr)\|\widetilde{\bu}\|_{\bV},
\end{align*}
and hence,
using also the triangle inequality, we find that  
\begin{align}\label{eq:u_bound}
    \|\bu\|_{\bV}\leq \|\widetilde{\bu}\|_{\bV}+\|\revone{\bu^m}\|_{\bV}\leq \frac{1}{\alpha}\bigl(\|{F}\|_{\bV'}+\frac{1+\alpha}{\beta}\|G\|_{\rQ'}+\frac{1}{\beta^{r-1}}\|G\|_{\rQ'}^{r-1}\bigr). 
\end{align}
On the other hand, \revtwo{from 
\eqref{est:p_bound} and 
employing again the local Lipschitz continuity of \(A+C\) with \(\bv=\bzero\)  in \eqref{eq:lip}, we can invoke inequality  \eqref{eq:u_bound} to} arrive at 
    \begin{align*}
    \|p\|_{\rQ}& \leq\|{F}\|_{\bV'}+\|\bu\|_{\bV}+ \|\bu\|_{\bV}^{r-1}\\&\leq \|{F}\|_{\bV'}+\frac{1}{\alpha}\bigl(\|{F}\|_{\bV'}+\frac{1+\alpha}{\beta}\|G\|_{\rQ'}+\frac{1}{\beta^{r-1}}\|G\|_{\rQ'}^{r-1}\bigr)
   \\&\quad  +\frac{1}{\alpha^2}\bigl(\|{F}\|_{\bV'}+\frac{1+\alpha}{\beta}\|G\|_{\rQ'}+\frac{1}{\beta^{r-1}}\|G\|_{\rQ'}^{r-1}\bigr)^{r-1}\\
    &= \frac{1}{{\alpha}}\bigl((1+\alpha)\|{F}\|_{\bV'}+\frac{1+\alpha}{\beta}\|G\|_{\rQ'}+\frac{1}{\beta^{r-1}}\|G\|_{\rQ'}^{r-1}\bigr)\\
    &\quad    +\frac{1}{\alpha^2}\bigl(\|{F}\|_{\bV'}+\frac{1+\alpha}{\beta}\|G\|_{\rQ'}+\frac{1}{\beta^{r-1}}\|G\|_{\rQ'}^{r-1}\bigr)^{r-1}.
\end{align*}
This concludes the rest of the proof.
\end{proof}
\revtwo{Note that if $|\Gamma_{p}|=0$ then the weighted spaces take the form 
\[\bV=  \kappa^{-\frac{1}{2}}\bL^2(\Omega) \cap \bH_0(\vdiv,\Omega) \cap \revone{\rF^{\frac1r} \bL^r(\Omega)}, \quad \rQ = \rL_0^2(\Omega) + \kappa^{\frac{1}{2}}\widetilde{\rH}^1(\Omega) + \rF^{-\frac1r}\widetilde{\rW}^{1,r'}(\Omega),\]
where $\rL_0^2(\Omega) = \{q \in \rL^2(\Omega): \int_\Omega q = 0\}$, $\widetilde{\rH}^1(\Omega)=\{w\in \rH^1(\Omega): \int_\Omega w = 0\}$, and  $\widetilde{\rW}^{1,r'}(\Omega)=\{z\in \rW^{1,r'}(\Omega): \int_\Omega z = 0\}$. On the other hand, if $|\Gamma_{\bu}|=0$ then 
\[\bV=  \kappa^{-\frac{1}{2}}\bL^2(\Omega) \cap \bH(\vdiv,\Omega) \cap \revone{\rF^{\frac1r} \bL^r(\Omega)}, \quad \rQ = \rL^2(\Omega) + \kappa^{\frac{1}{2}}\rH_0^1(\Omega) + \rF^{-\frac1r}\rW_0^{1,r'}(\Omega).\]}
\section{Mixed finite element method and robust solvability}\label{sec:discrete}
Let \( {\cT}_h \) be a regular simplicial mesh defined in a bounded Lipschitz domain \( \Omega \), consisting of tetrahedral elements (or triangular elements in two dimensions) \( K \) with diameter \( h_K \). We define the mesh size as \( h := \max\{h_K : K \in {\cT}_h\} \). Given an integer \( k \geq 0 \) and a generic element \( K \in {\cT}_h \), we denote by \( \mathbb{P}_k(K) \) the space of polynomials defined locally in \( K \) with a degree at most \( k \), {and denote by \( \mathbb{P}_k(\cT_h) \) its global counterpart} 
\[
{\mathbb{P}_k(\cT_h)} := \left\{ p \in \rL^2(\Omega) : p|_K \in \mathbb{P}_k(K), \quad \forall K \in {\cT}_h \right\}.
\]
In addition, let ${\mathbb{RT}_k(\cT_h)}$ denote the \revone{discrete \( \mathbf{H}_{\Gamma_{\bu}}({\vdiv}, \Omega) \)-conforming} subspace defined by the Raviart--Thomas  elements of order \( k \), i.e.,  
\[
{\mathbb{RT}_k(\cT_h)} := \{ \bu \in \mathbf{H}_{\Gamma_{\bu}}({\vdiv}, \Omega) : \bu|_{K} = [\mathbb{P}_k(K)]^d \oplus \widetilde{\mathbb{P}}_k(K) \bx, \ \bx \in \Omega \subset \mathbb{R}^d, K \in {\cT}_h \},
\]
where we denote by \( \widetilde{\mathbb{P}}_k(K) \) the space of all polynomials over \( K \) of degree exactly equal to \( k \) and  \( '\oplus' \) denotes the direct sum of spaces.  With this we define the discrete weighted space for the velocity as \revone{all the vector fields in $\mathbb{RT}_k(\cT_h)$ such that their $\bV$ norm (cf. \eqref{eq:norms-V}) is bounded. Equivalently, and with the aim to emphasise the parameter-dependence, we write this space as} 
\begin{align}\label{eq:discrete.Vh}
 \bV\supseteq\bV_h:=  \kappa^{-\frac{1}{2}}\bL_h^2(\Omega) \cap \bH_{h,\Gamma_{\bu}}(\vdiv,\Omega) \cap \revone{\rF^{\frac1r} \bL_h^r(\Omega)},
 \end{align}
 where \( {\bL}_h^2(\Omega), {\bH}_{{h,\Gamma_{\bu}}}({\vdiv}, \Omega) , \) and \revone{\({\bL}_h^r(\Omega)\)} correspond to  functions in ${\mathbb{RT}_k(\cT_h)}$ equipped with the \( {\bL}^2(\Omega), {\bH}({\vdiv},\Omega),\) and \(\bL^r(\Omega)\)-norms, respectively.
 
In turn, define the discrete gradient operator \( \nabla_h: {\mathbb{P}_k(\cT_h)} \to {\mathbb{RT}_k(\cT_h)} \) by (\revone{cf.}, \cite[eqn. (3.1)]{arnold1997preconditioning})
\begin{align}\label{eq:disc.grad}
(\nabla_h q_h, \bv_h) = -(\vdiv\bv_h,q_h), \quad \forall q_h \in {\mathbb{P}_k(\cT_h)}, \text{ and } \forall \bv_h \in {\mathbb{RT}_k(\cT_h)}.
\end{align}
Additionally, define the following broken Sobolev spaces 
\begin{align*}
    {\rH^1}(\mathcal{T}_h)&:=\{p\in\rL^2(\Omega):p|_K\in\rH^1(K), \quad \forall K\in \mathcal{T}_h\},\\
    {\rW^{1,r'}}(\mathcal{T}_h)&:=\{p\in\rL^2(\Omega):p|_K\in\rW^{1,r'}(K), \quad \forall K\in \mathcal{T}_h\},
\end{align*}
and whenever needed we add a subscript $\Gamma_i$ denoting the part on the boundary where the value of the function is prescribed to zero. These spaces are equipped with DG (broken Sobolev) seminorms 
\(|\bullet|_{{1,\mathcal{T}_h}}\) and 
\(|\bullet|_{{1,r',\mathcal{T}_h}}\),
defined as 
\begin{align*}
{\rH^1}(\mathcal{T}_h) \ni \zeta & \mapsto  |\zeta|^2_{{1,\mathcal{T}_h}} := \biggl(
\sum_{K\in\mathcal{T}_h}\| {\nabla} \zeta\|^{2}_{0,K} 
+ \sum_{F\in \mathcal{F}_h} h_F^{-1}\|[\![\zeta]\!]\|^2_{0,F}
\biggr)^{\frac12}, 
\\
\rW^{1,r'}(\mathcal{T}_h) \ni \xi & \mapsto 
\revone{|\xi|_{{1,r',\mathcal{T}_h}} := \biggl(\sum_{K\in\mathcal{T}_h}\|{\nabla} \xi\|^{r'}_{0,r',K}
+ \sum_{F\in \mathcal{F}_h} h_F^{1-r'}\|[\![\xi]\!]\|^{r'}_{0,r',F}
\biggr)^{\frac{1}{r'}}},\end{align*}
respectively, where $\mathcal{F}_h$ denotes the set of facets in the mesh $\cT_h$. 

Furthermore, we define the following weighted space for pressure, which is a sum of the infinite-dimensional (but broken) spaces $\rL^2(\Omega)$, \(\rH^1_{\Gamma_p}(\mathcal{T}_h)\) and \revone{\(\rW^{1,r'}_{\Gamma_p}(\mathcal{T}_h)\)}: 
\begin{equation}\label{eq:broken.Qhat}
{\widehat{\rQ}}(\cT_h):=\rL^2(\Omega)+\kappa^\frac12\rH^1_{\Gamma_p}(\mathcal{T}_h)+\revone{\rF^{-\frac1r}\rW^{1,r'}_{\Gamma_p}(\mathcal{T}_h)}.
\end{equation}
In this space we define a broken weighted norm for any  \( q \in{\widehat{\rQ}(\cT_h)}\) as follows  
\begin{equation}\label{def:Qhat-norm}
\|q\|_{{\widehat{\rQ}(\cT_h)}}:= 
\inf_{\substack{
    q = q_{1} + q_{2} + q_{3} \\
    (q_{1},q_{2},q_{3}) \in \\ \rL^2(\Omega)
     \times\rH_{\Gamma_p}^1(\mathcal{T}_h)
    \times \revone{\rW_{\Gamma_p}^{1,r'}(\mathcal{T}_h)}
}}
\left(
    \|q_{1}\|_{0,\Omega} 
    +\kappa^{\frac12}\, |q_{2}|_{1,\mathcal{T}_h} 
    + \revone{\rF^{-\frac1r} |q_{3}|_{1,r',\mathcal{T}_h}}
\right).
\end{equation}
Finally, \revthree{we define the discrete pressure space as all the functions in $\mathbb{P}_k(\cT_h)$ whose $\widehat{\rQ}_h(\cT_h)$ norm is bounded (see Remark \ref{rem:Qh})}
\begin{align}\label{eq:disc.Ph}
\rQ_h :=  \kappa^{\frac12} \rH^1_{h,{\Gamma_p}}(\Omega)+\rL_h^2(\Omega) +\revone{\rF^{-\frac1r}\rW^{1,r'}_{h,\Gamma_p}(\Omega)},
\end{align}
where \( \rL_h^2(\Omega), \rH^1_h(\Omega), \) and \revone{\(\rW^{1,r'}_h(\Omega)\)} are polynomial spaces having discrete functions from  \( {\mathbb{P}_k(\cT_h)} \), and equipped with the \( \rL^2(\Omega)\) norm  and the broken \(\rH^1(\cT_h),\,\revone{\rW^{1,r'}(\cT_h)}\) seminorms, respectively. 

\begin{remark}\label{rem:Qh}We equip $\rQ_h$ with the $\|\bullet\|_{\widehat{\rQ}(\cT_h)}$ norm \revthree{since we cannot compute, for example, the continuous $\rH^1(\Omega)$ and $\rW^{1,r'}(\Omega)$ norms of functions in $\mathbb{P}_k(\cT_h)$. We do have} that $\rQ_h \subseteq \widehat{\rQ}(\cT_h)$, but $\rQ_h$ is not a subspace of $\rQ$. This entails a non-conforming (in the pressure) scheme, requiring to define a discrete bilinear form associated with the discrete divergence operator 
$b_h: \bV_h\times \rQ_h \to \mathbb{R}$ 
 as well as a discrete linear functional \(G_h:{\rQ_h}\to\mathbb{R}\), as follows
\[
\revtwo{b_h(\bv_h,q_h):=-\int_{\Omega}\vdiv\bv_h\,q_h, \quad  G_h(q_h):=-\int_{\Omega}g\,q_h,} \quad\forall \bv_h\in \bV_h,  q_h\in  {\rQ_h}.\]
\end{remark}
 Then, with the conforming velocity and non-conforming pressure FE spaces \eqref{eq:discrete.Vh} and \eqref{eq:disc.Ph}, 
 the mixed finite element formulation of \eqref{eq:strong} consists in  finding $(\bu_h,p_h)\in \bV_h\times \rQ_h$ such that 
\begin{subequations}\label{eq:weak_Galerkin}
    \begin{align}
    a(\bu_h,\bv_h) + c(\bu_h; \bu_h,\bv_h) +  {b_h(\bv_h,p_h)} & = F(\bv_h) \quad \forall \bv_h \in \bV_h,\label{eq:1st.Galerkin}\\
    {b_h(\bu_h,q_h)} & = {G_h(q_h)} \quad \forall q_h \in \rQ_h\label{eq:2nd.Galerkin}.
    \end{align}
\end{subequations}
Similarly to the continuous case, an analysis using $\bV_h = (\mathbb{RT}_k(\cT_h),\|\bullet\|_{\bH^{\revone{r}}(\vdiv,\Omega)})$ and $\rQ_h = (\mathbb{P}_k(\cT_h),\|\bullet\|_{\rL^2(\Omega)})$ can be performed (using, e.g.,  \cite[Theorem 4.1]{caucao2020conforming}, \cite[Theorem 3.5]{pan2012mixed}) however the coercivity on the kernel and inf-sup conditions (and therefore also the stability estimates) are not necessarily robust in \(\kappa,\rF\). To achieve such robustness we employ the weighted spaces  \eqref{eq:discrete.Vh} and \eqref{eq:disc.Ph}. 

\subsection{Robust discrete solvability}
First, we prove the discrete inf-sup condition, which is an essential ingredient of the proof of the parameter-robust unique solvability 
Theorem, discussed in this subsection.  
\begin{lemma}\label{eq:S_disc}
        There exists an operator \(\cS_d\) satisfying the following properties: 
        \begin{gather}\nonumber
    \cS_d \in \cL(\rQ_h', \bV_h),\quad 
    \|\cS_d\|_{\cL(\rQ_h', \bV_h)}\ \text{is independent of } \kappa,\rF, h\quad \text{and}\\
    ({\vdiv} [\cS_d (g_h)], \psi_h) = \langle g_h, \psi_h \rangle \quad \text{for all}\ g_h \in \rQ_h',\psi_h \in \rQ_h.
\end{gather}
    \end{lemma}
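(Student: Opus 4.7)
My strategy is to construct $\cS_d$ by composing the continuous operator $\cS_c$ from Lemma~\ref{lem:inf-sup} with a Fortin-type interpolator onto $\bV_h$, and then to verify uniform stability in the weighted norms. The three ingredients I will need are: the operator $\cS_c:\rQ'\to\bV$ from Lemma~\ref{lem:inf-sup}; a Fortin interpolator $\Pi_h^{\mathbb{RT}}:\bV\to\bV_h$ satisfying the commuting diagram property $\vdiv\,\Pi_h^{\mathbb{RT}}\bv = P_h^{\rL^2}\vdiv\bv$ (where $P_h^{\rL^2}$ is the $\rL^2$-projection onto $\mathbb{P}_k(\cT_h)$) that is simultaneously and uniformly stable in $\bL^2(\Omega)$, $\bH(\vdiv,\Omega)$, and $\bL^3(\Omega)$; and an $h$-independent comparison between the continuous dual norm $\|\bullet\|_{\rQ'}$ and the broken dual norm $\|\bullet\|_{\rQ_h'}$.

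\textbf{Construction and verification.} For $g_h\in\rQ_h'$, identify $g_h$ with its $\rL^2$-Riesz representative $G_h\in\mathbb{P}_k(\cT_h)\subset\rL^2(\Omega)\subset\rQ'$, and set
\[\cS_d(g_h) := \Pi_h^{\mathbb{RT}}\cS_c(G_h).\]
Since Lemma~\ref{lem:inf-sup} gives $\vdiv\cS_c(G_h)=G_h$ in $\rL^2(\Omega)$, the commuting diagram yields $\vdiv\cS_d(g_h)=P_h^{\rL^2}(G_h)=G_h$ (as $G_h\in\mathbb{P}_k(\cT_h)$), so the divergence identity analogous to \eqref{S:prop} holds. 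For the norm bound, I chain three estimates
\[\|\cS_d(g_h)\|_{\bV_h} = \|\Pi_h^{\mathbb{RT}}\cS_c(G_h)\|_{\bV_h} \lesssim \|\cS_c(G_h)\|_{\bV} \lesssim \|G_h\|_{\rQ'} \lesssim \|g_h\|_{\rQ_h'},\]
with constants independent of $h,\kappa,\rF$. The first inequality uses uniform Fortin stability in each weighted component of $\bV$; the second is Lemma~\ref{lem:inf-sup}; the third is obtained by noting that, for any $q\in\rQ$, $\int_{\Omega}G_h\,q=\int_{\Omega}G_h\,P_h^{\rL^2}q=\langle g_h,P_h^{\rL^2}q\rangle\leq\|g_h\|_{\rQ_h'}\|P_h^{\rL^2}q\|_{\widehat{\rQ}(\cT_h)}$, combined with the classical $\rH^1$- and $\rW^{1,\frac32}$-seminorm stability of $P_h^{\rL^2}$ on continuous functions (Bramble-Hilbert plus inverse inequalities, uniform in $h$ for shape-regular meshes).

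\textbf{Main obstacle.} The central technical difficulty is the construction (and invocation) of a Fortin interpolator $\Pi_h^{\mathbb{RT}}$ that is simultaneously stable in $\bL^2$, $\bH(\vdiv)$, and $\bL^3$ uniformly in $h$. The classical Raviart-Thomas projection is defined through face and interior moments requiring $\bL^r$-regularity with $r>2$ of the argument; this regularity is available since $\cS_c(G_h)\in\rF^{\frac13}\bL^3(\Omega)$ by Lemma~\ref{lem:inf-sup}, but the $\bL^3$-stability constant must be carefully tracked as mesh-independent, which may require a regularised variant of the Fortin map. An alternative direct route would be to define $\cS_d(g_h)$ as the velocity component $\bw_h$ of the solution to a discrete mixed Poisson system on the $\mathbb{RT}_k$-$\mathbb{P}_k$ pair and to prove the three weighted bounds by hand; this sidesteps the Fortin construction but requires controlling jump contributions arising from discrete integration by parts on discontinuous $r_h$. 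In either case, combining the three bounds through the weighted definition of $\|\bullet\|_{\bV_h}$ and the duality \eqref{eq:duality} yields $\|\cS_d\|_{\cL(\rQ_h',\bV_h)}$ bounded independently of $h,\kappa,\rF$, as required.
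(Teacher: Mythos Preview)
Your approach is correct in outline and takes a genuinely different route from the paper. The paper works entirely at the discrete level: it sets $\cS_d(g_h):=\nabla_h\phi_h$, where $\nabla_h$ is the discrete gradient \eqref{eq:disc.grad} and $\phi_h\in\mathbb{P}_k(\cT_h)$ solves the discrete elliptic problem $(\nabla_h\phi_h,\nabla_h\psi_h)=\langle g_h,\psi_h\rangle$, obtaining the $\bL^2$ and $\bL^3$ bounds on $\nabla_h\phi_h$ by invoking the Banach--Ne\v{c}as--Babu\v{s}ka theorem for this finite-dimensional problem, while the $\bH(\vdiv)$ bound comes from the standard $\mathbb{RT}_k$--$\mathbb{P}_k$ inf-sup. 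You instead lift $g_h$ to the continuous dual via its $\rL^2$-representative $G_h$, apply $\cS_c$ from Lemma~\ref{lem:inf-sup}, and interpolate back. Your route reuses Lemma~\ref{lem:inf-sup} wholesale, and your third inequality $\|G_h\|_{\rQ'}\lesssim\|g_h\|_{\rQ_h'}$, obtained via element-wise broken-seminorm stability of the local $\rL^2$-projection onto $\mathbb{P}_k(\cT_h)$, is clean and correct. The cost, which you rightly isolate as the main obstacle, is a commuting quasi-interpolant that is simultaneously $\bL^2$-, $\bL^3$-, and $\bH(\vdiv)$-stable uniformly in $h$; smoothed projections of Arnold--Falk--Winther or Ern--Guermond type do furnish this, so the argument closes, but it is heavier machinery than the paper's discrete-Poisson construction. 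Conversely, the paper trades the Fortin technicality for a discrete $\bL^r$-regularity statement for $\nabla_h\phi_h$ that it also leaves largely to a citation.
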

    \begin{proof}
        We first define a bounded linear operator \(\cS_d\) from three different spaces, and then, invoke the duality relations between sum and intersection between Banach spaces \eqref{eq:duality} to conclude \(\cS_d\in \cL(\rQ_h',\bV_h).\) 
        
   For the existence of \( {\cS_d} \in {\cL}\bigl((\rW_{h,\Gamma_p}^{1, q}(\Omega))', {\bL}^{\revone{q'}}_h(\Omega) \bigr) \)
with $q\in \{2,\revone{r'}\}$, let  
\( \phi_h \in \rW^{1,\revone{q'}}_{h,\Gamma_p}(\Omega)\)  for a given  \( g_h \in (\rW_{h,\Gamma_p}^{1,q}(\Omega))'\) with $\frac1q + \frac{1}{q'}=1$ be a unique solution of the following weak formulation
\begin{equation}
    \langle\nabla_h\phi_h,\nabla_h\psi_h\rangle=\langle g_h,\psi_h\rangle \qquad \forall \psi_h\in \rW^{1,q}_{h,\Gamma_p}(\Omega),\label{eq:aux03}
\end{equation}
owing to the Banach--Ne\v{c}as--Babu\v{s}ka theorem for elliptic problems in Banach spaces \cite{ern2021finite}. Then, it  suffices to define \( \cS_d (g_h) := \nabla_h \phi_h \), 
where $\nabla_h \phi_h \in \bL_h^{\revone{q'}}(\Omega)$.  The continuous dependence on data coming from \eqref{eq:aux03} also implies that $\|\cS_d (g_h)\|_{0,q';\Omega} \lesssim \|g_h\|$. This, in turn, shows
\(
\cS_d \in {\cL}\bigl( (\rW^{1,q}_{h,\Gamma_p}(\Omega))', {\bL}_h^{q'}(\Omega) \bigr)\), \revone{for $q=2$ and $q=r'$ corresponding to $q'=2$ and $q'=r$, respectively}.

Next,   the existence and boundedness of \(\cS_d\in\cL\bigl((\rL^2_h(\Omega))',\bH_{h,\Gamma_{\bu}}(\vdiv,\Omega)\bigr)\) 
is a direct consequence of the surjectivity of the divergence operator from $\bH_{\Gamma_{\bu}}(\vdiv,\Omega)$ onto $\rL^2(\Omega)$ and standard properties of the Raviart--Thomas interpolation (even in the case of mixed boundary conditions) \cite{boffi2013mixed,gatica2014simple}. 
    \end{proof}
Below, we prove the discrete inf-sup condition.
\begin{lemma}[Discrete inf-sup condition]\label{eq:discrte_inf-sup}
 There exists a positive constant $\beta_d$, independent of $h,\kappa$ and $\rF$ such that  
 \begin{align*}
    \|q_h\|_{{\widehat{\rQ}(\cT_h)}}
    \leq {\beta_d}\sup_{\bv_h\in\bV_h}\frac{|(q_h,\vdiv\bv_h)|}{\|\bv_h\|_{\bV}}.
\end{align*} 
\end{lemma}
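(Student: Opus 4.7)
The plan is to transplant the continuous argument from the proof of Theorem~\ref{th:continuous} to the discrete level, using $\cS_d$ from Lemma~\ref{eq:S_disc} as a parameter-robust, mesh-independent right inverse of the divergence on $\rQ_h'$. Concretely, for a fixed $q_h \in \rQ_h$ I would restrict the supremum on the right-hand side of the target inequality to test functions of the form $\bv_h = \cS_d(g_h)$ parametrised by $g_h \in \rQ_h'$. Using the identity $(\vdiv \cS_d(g_h), q_h) = \langle g_h, q_h \rangle$ from Lemma~\ref{eq:S_disc} together with the associated parameter- and mesh-independent operator bound on $\cS_d$, this produces the chain
\[
\sup_{\bzero \neq \bv_h \in \bV_h} \frac{(q_h, \vdiv \bv_h)}{\|\bv_h\|_\bV} \geq \sup_{g_h \in \rQ_h'} \frac{\langle g_h, q_h\rangle}{\|\cS_d(g_h)\|_\bV} \geq \|\cS_d\|^{-1}_{\cL(\rQ_h', \bV_h)} \sup_{g_h \in \rQ_h'} \frac{\langle g_h, q_h\rangle}{\|g_h\|_{\rQ_h'}}.
\]

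Next, I would identify the remaining supremum with $\|q_h\|_{\widehat{\rQ}(\cT_h)}$. Since the remark preceding \eqref{eq:weak_Galerkin} equips $\rQ_h$ with (the restriction of) the broken sum norm $\|\bullet\|_{\widehat{\rQ}(\cT_h)}$ from \eqref{eq:broken.Qhat}, and since $\rQ_h$ is finite-dimensional and hence reflexive, the canonical embedding $\rQ_h \hookrightarrow \rQ_h''$ is an isometry. This yields $\sup_{g_h \in \rQ_h'} \langle g_h, q_h\rangle / \|g_h\|_{\rQ_h'} = \|q_h\|_{\widehat{\rQ}(\cT_h)}$, and combining with the previous chain delivers the claim with $\beta_d := \|\cS_d\|_{\cL(\rQ_h', \bV_h)}$, whose independence of $\kappa$, $\rF$ and $h$ is precisely the content of Lemma~\ref{eq:S_disc}.

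The main subtlety I expect is maintaining consistent bookkeeping of the norm carried by the non-conforming discrete pressure space. Every item in the chain above (the operator bound $\|\cS_d(g_h)\|_\bV \leq \|\cS_d\|\,\|g_h\|_{\rQ_h'}$, the definition of the dual norm on $\rQ_h'$, and the isometric double-dual identification) has to measure elements of $\rQ_h$ with $\|\bullet\|_{\widehat{\rQ}(\cT_h)}$, which is an infimum over three-way decompositions. This is exactly why $\cS_d$ was built piecewise from three complementary bounded maps in Lemma~\ref{eq:S_disc}: linearity of $\cS_d$ together with a decomposition $q_h = q_{h,1}+q_{h,2}+q_{h,3}$ is compatible with the infimum in the sum-space norm, so no further inf--sup estimate beyond what Lemma~\ref{eq:S_disc} already provides is needed.
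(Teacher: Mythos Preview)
Your proposal is correct and follows essentially the same argument as the paper's proof: restrict the supremum to $\bv_h=\cS_d(g_h)$, invoke the divergence identity and operator bound from Lemma~\ref{eq:S_disc}, and identify the resulting dual supremum with $\|q_h\|_{\widehat{\rQ}(\cT_h)}$. Your additional remarks on the double-dual isometry and the sum-space bookkeeping merely make explicit what the paper leaves implicit.
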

\begin{proof}
As a consequence of Lemma \ref{eq:S_disc}, we can assert that  
    \begin{align*}
\sup_{\bzero\neq\bv_h \in \bV_h} \!\!\frac{|({\vdiv} \,\bv_h, p_h)|}{\|\bv_h\|_\bV} &\geq \sup_{g_h \in \rQ_h'} \frac{|({\vdiv} [\cS_d (g_h)], p_h)|}{\|\cS_d (g_h)\|_\bV} = \sup_{g_h \in \rQ_h'} \frac{|\langle g_h, p_h \rangle|}{\|\cS_d (g_h)\|_\bV}\\& \geq \|\cS_d\|^{-1}_{\cL(\rQ_h', \bV_h)} \sup_{g_h \in \rQ_h'} \frac{|\langle g_h, p_h \rangle|}{\|g_h\|_{{\rQ}'_h}} = \beta_d \|p_h\|_{\widehat{\rQ}(\cT_h)},
\end{align*}
which concludes the proof.
\end{proof}

\noindent 
The following result is on  robust well-posedness  of the discrete mixed problem \eqref{eq:weak_Galerkin}.
\begin{theorem}[Parameter-robust unique solvability]\label{th:discrete}
Consider the weighted spaces defined in \eqref{eq:discrete.Vh} and \eqref{eq:disc.Ph} with the corresponding weighted norms,
and assume  that $\bf\in \bL^2(\Omega),\, g\in \rL^2(\Omega)$. Then, there exists a unique  pair $(\bu_h,p_h)\in \bV_h\times\rQ_h$ of  solutions to the discrete problem \eqref{eq:weak_Galerkin}. Furthermore, there exists a positive constant $\hat{C}$ independent of \(h,\) and the model parameters $\kappa, \rF$, such that
\begin{equation}\label{est:disc.stab}
    \|(\bu_h, q_h) \|_{\bV\times {{\widehat{\rQ}(\cT_h)}}}  \leq \hat{C}\max\bigl\{\|\bf\|_{0,\Omega} + \| g \|_{0,\Omega} + \|g\|^{\revone{r-1}}_{0,\Omega},(\|\bf\|_{0,\Omega} + \| g \|_{0,\Omega} + \|g\|^{\revone{r-1}}_{0,\Omega})^{r-1}\bigr\}.
    \end{equation}
\end{theorem}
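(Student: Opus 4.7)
The plan is to transpose the continuous argument of Theorem~\ref{th:continuous} to the discrete setting by applying the abstract framework of Theorem~\ref{th:abstract} with $X \equiv \bV_h$, $Y \equiv \rQ_h$, and $s = 3$. I would introduce the discrete operators $A_h, C_h : \bV_h \to \bV_h'$ and $B_h : \bV_h \to \rQ_h'$ via $\langle A_h \bu_h, \bv_h\rangle := a(\bu_h, \bv_h)$, $\langle C_h(\bu_h), \bv_h\rangle := c(\bu_h; \bu_h, \bv_h)$, and $\langle B_h \bv_h, q_h\rangle := b_h(\bv_h, q_h)$, and recast \eqref{eq:weak_Galerkin} in the abstract form covered by that theorem.

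Verification of the three hypotheses would proceed in parallel with the continuous case. Since $\bV_h \subset \bV$ is a conforming subspace equipped with the inherited weighted norm, the local Lipschitz continuity \eqref{eq:lip} passes to the discrete operators by restriction with the same constants, independent of $h, \kappa, \rF$. For the uniformly strong monotonicity on the discrete kernel, I would first characterize $\bZ_h := \{\bv_h \in \bV_h : b_h(\bv_h, q_h) = 0 \ \forall q_h \in \rQ_h\}$ by observing that $\vdiv \bV_h \subseteq \mathbb{P}_k(\cT_h) = \rL^2_h(\Omega) \subseteq \rQ_h$, so choosing $q_h = \vdiv \bv_h$ forces $\bZ_h = \{\bv_h \in \bV_h : \vdiv \bv_h = 0\}$; the algebraic manipulation leading to \eqref{eq:strict}, including the pointwise bound of \cite[Lemma 2.1]{barrett1993finite}, is then reproduced verbatim with the same constant $\alpha$. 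The discrete inf-sup stability is already provided by Lemma~\ref{eq:discrte_inf-sup} with constant $\beta_d$ independent of $h, \kappa, \rF$. With all three hypotheses in hand, Theorem~\ref{th:abstract} furnishes the unique existence of $(\bu_h, p_h) \in \bV_h \times \rQ_h$.

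For the stability bound \eqref{est:disc.stab}, I would mirror the final part of the proof of Theorem~\ref{th:continuous}. Surjectivity of $B_h : \bV_h \to \rQ_h'$ follows from the discrete inf-sup and produces a lifting $\hat{\bu}_h \in \bV_h$ of $G_h$ with $\|\hat{\bu}_h\|_{\bV} \leq \beta_d^{-1}\|G_h\|_{\rQ_h'}$. Writing $\bu_h = \widetilde{\bu}_h + \hat{\bu}_h$ with $\widetilde{\bu}_h \in \bZ_h$, testing \eqref{eq:1st.Galerkin} with $\widetilde{\bu}_h$, applying the strong monotonicity of $\{A_h + C_h\}(\bullet + \hat{\bu}_h)$ on $\bZ_h$ (taking $\bu = \widetilde{\bu}_h$, $\bv = \bzero$), and invoking the Lipschitz bound on $\{A_h + C_h\}(\hat{\bu}_h)$ (setting $\bv = \bzero$ in \eqref{eq:lip}), yields the analogue of \eqref{est:ut} and then \eqref{eq:u_bound} by the triangle inequality. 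The pressure bound in the broken norm $\|\bullet\|_{\widehat{\rQ}(\cT_h)}$ is obtained from $B_h^\ast p_h = F - \{A_h + C_h\}(\bu_h)$ and a final application of Lemma~\ref{eq:discrte_inf-sup}; the constants are tracked exactly as in the continuous case to arrive at \eqref{est:disc.stab}.

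The main subtlety compared with the continuous setting is the non-conformity $\rQ_h \not\subseteq \rQ$: every occurrence of $B$, $b$, $G$, and $\|\bullet\|_{\rQ}$ must be replaced by $B_h$, $b_h$, $G_h$, and $\|\bullet\|_{\widehat{\rQ}(\cT_h)}$, with Lemma~\ref{eq:discrte_inf-sup} playing the role of its continuous counterpart. The crucial structural ingredient making this transposition work is the inclusion $\vdiv \bV_h \subseteq \rQ_h$, which allows the kernel characterization $\bZ_h = \{\bv_h \in \bV_h : \vdiv \bv_h = 0\}$ to be inherited without modification and the monotonicity step to go through unchanged.
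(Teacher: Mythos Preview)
Your proposal is correct and follows essentially the same approach as the paper: apply the abstract framework of Theorem~\ref{th:abstract} with $X=\bV_h$, $Y=\rQ_h$, $s=3$, inherit local Lipschitz continuity by conformity of $\bV_h\subset\bV$, characterise $\bZ_h=\{\bv_h\in\bV_h:\vdiv\bv_h=0\}$ via the inclusion $\vdiv\bV_h\subseteq\rQ_h$ to reproduce the monotonicity argument, invoke Lemma~\ref{eq:discrte_inf-sup} for the inf-sup, and then mirror the continuous stability derivation. The paper cites a discrete variant of Theorem~\ref{th:abstract} from \cite{caucao2020conforming} rather than applying Theorem~\ref{th:abstract} directly, but this is purely cosmetic since $\bV_h,\rQ_h$ are reflexive Banach spaces in their own right.
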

\begin{proof}
  The discrete well-posedness analysis uses the discrete version of Theorem~\ref{th:abstract}, detailed in \cite[Theorem 4.1]{caucao2020conforming}.  The local Lipschitz continuity of the discrete counterpart of \(\{A+C\}\) follows similarly as in Theorem \ref{th:continuous}, and the discrete inf-sup condition follows from Lemma~\ref{eq:discrte_inf-sup}. Hence,  to complete the rest of the proof, it is enough to prove the {uniformly strong monotonicity}.

\medskip
\noindent
{\it Uniformly strong monotonicity of \(\{a(\bullet,\bullet)+c(\bullet;\bullet,\bullet)\}(\bullet+\bw_h)\).} 
We start with the observation that since \(\vdiv \bV_h\subset \rQ_h,\) for all $\bu_h\in \bZ_h:=\{\bv_h \in \bV_h: b_h(\bv_h,q_h)=0 \quad \forall q_h\in \rQ_h\},$ we can choose $q_h=\vdiv \bu_h \in \rQ_h.$ Therefore, we characterise the discrete kernel as follows 
\[ \bZ_h= \{ \bv_h\in \bV_h: \vdiv\bv_h = 0\},\]
and we remark that \(\bZ_h \subseteq \bZ\), which directly follows from the characterisation of \(\bZ_h\). Now, for $\bu_h,\bv_h \in \bZ_h$  and ${\bw_h\in \bV_h}$, the following relation holds
     \begin{align*}
       &a(\bu_h+\bw_h,\bu_h-\bv_h)+c(\bu_h+\bw_h;\bu_h+\bw_h,\bu_h-\bv_h)-a(\bv_h+\bw_h,\bu_h-\bv_h)-\\&c(\bv_h+\bw_h;\bv_h+\bw_h,\bu_h-\bv_h)\\&
       =\int_\Omega \! \kappa^{-1}|\bu_h-\bv_h|^2\\
       &\quad 
         +\rF\int_\Omega  \left((\bu_h+\bw_h)|\bu_h+\bw_h|^{r-2}-(\bv_h+\bw_h)|\bv_h+\bw_h|^{r-2}\right)\cdot(\bu_h-\bv_h).
     \end{align*}
By employing \cite[Lemma 2.1, equation (2.1b)]{barrett1993finite} we deduce that there exists a constant \(\hat{C}>0\) independent of \(h,\kappa,\) and \(\rF,\) such that 
     \[\revone{
         \rF\int_\Omega\!  \!\!\left((\bu_h+\bw_h)|\bu_h+\bw_h|^{r-2}
         \!-\!(\bv_h+\bw_h)|\bv_h+\bw_h|^{r-2}\right)\!\cdot(\bu_h\! -\bv_h) \gtrsim \rF\|\bu_h \! -\bv_h\|^r_{0,r,\Omega}, 
}\]
and rearranging terms  we get
     \begin{align}\label{eq:monotone.disc} a(&\bu_h+\bw_h,\bu_h-\bv_h)+c(\bu_h+\bw_h;\bu_h+\bw_h,\bu_h-\bv_h)\nonumber\\&-a(\bv_h+\bw_h,\bu_h-\bv_h)-c(\bv_h+\bw_h;\bv_h+\bw_h,\bu_h-\bv_h) \geq \widetilde{\alpha} \| \bu_h - \bv_h \|_{\bV}^2,\end{align}
where we have used that the \(\bH(\vdiv)\)-seminorm is zero since \(\bu_h, \bv_h \in \mathbf{Z}_h\). Note also that \(\widetilde{\alpha}\) is independent of \(h,\kappa\), and \(\rF\). 

Thus, an appeal to the discrete version of  the Theorem ~\ref{th:abstract} ensures  existence and  uniqueness of the discrete solution. Finally, the discrete stability estimate~\eqref{est:disc.stab} follows similarly to the continuous case (Theorem \ref{th:continuous}), and we observe that the  discrete stability constant  is independent of \(h\), \(\kappa\), and \(\rF\). 
\end{proof}

\subsection{ {\it{A priori}} error analysis}\label{sec:apriori}
\revone{Now we prove} a  theorem on {\it a priori} error estimates for the Darcy--Forchheimer model, which are robust with respect to \(\kappa\), \(h\), and \(\rF\). \revone{We recall, for a given subspace $S_h$ of the generic Banach space $(S,\|\bullet\|_S)$, the notation $\mathrm{dist}(v,S_h):=\inf_{w\in S_h}\|v-w\|_{S}$.}
\begin{theorem}[Quasi-optimality]\label{th:cea}
Let $(\bu, p)$ uniquely solve the continuous model (\ref{eq:weak}), and let the pair 
\((\bu_h, p_h)\) be the solution of the discrete model (\ref{eq:weak_Galerkin}). Then, the following quasi-optimal error estimates for the velocity and pressure hold
\begin{align}\label{est:Cea}
\|\bu - \bu_h\|_{\bV}+\|p - p_h\|_{{\widehat{\rQ}(\cT_h)}} \leq {C_c} \bigl( \mathrm{dist}( \bu, \bV_h ) +& \mathrm{dist}(p, {\rQ}_h)\bigr), 
\end{align}
where \({C_c}\) is independent of $h,\kappa$ and $\rF$.
\end{theorem}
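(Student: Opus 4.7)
\emph{Plan.} The theorem is a nonlinear C\'ea-type estimate, and I would obtain it by a discrete mirror of the continuous well-posedness argument of Theorem~\ref{th:continuous}. The three robust ingredients powering the proof are: the parameter-independent discrete inf-sup constant $\beta_d$ of Lemma~\ref{eq:discrte_inf-sup}, the uniformly strong monotonicity \eqref{eq:monotone.disc} of $\{A+C\}(\,\cdot\,+\bw_h)$ on the discrete kernel $\bZ_h$, and the local Lipschitz continuity \eqref{eq:lip} (whose discrete analogue is automatic by $\bV_h\subset\bV$).

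\emph{Galerkin orthogonality.} Since $\bV_h\subset\bV$ and $\rQ_h\subset\rL^2(\Omega)\subset\rQ$ as sets, and since $b$ and $b_h$ agree on $\bV_h\times\rQ_h$ (because $\vdiv\bv_h\in\rL^2(\Omega)$), testing \eqref{eq:weak} and \eqref{eq:weak_Galerkin} with $(\bv_h,q_h)\in\bV_h\times\rQ_h$ and subtracting yields
\begin{gather*}
a(\bu-\bu_h,\bv_h)+c(\bu;\bu,\bv_h)-c(\bu_h;\bu_h,\bv_h)+b(\bv_h,p-p_h)=0,\\
b_h(\bu-\bu_h,q_h)=0.
\end{gather*}

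\emph{Velocity estimate.} For an arbitrary $\bv_h\in\bV_h$, I would invoke the surjectivity provided by Lemma~\ref{eq:S_disc} to produce $\bz_h\in\bV_h$ satisfying $b_h(\bz_h,s_h)=b_h(\bu-\bv_h,s_h)$ for all $s_h\in\rQ_h$, with $\|\bz_h\|_{\bV}\leq \beta_d^{-1}\|\bu-\bv_h\|_{\bV}$, so that $\bv_h^{\star}:=\bv_h+\bz_h\in\bV_h$ has $\bv_h^{\star}-\bu_h\in\bZ_h$. Since $\vdiv(\bv_h^{\star}-\bu_h)=0$, substituting the Galerkin identity into the monotonicity \eqref{eq:monotone.disc} applied with kernel elements $\bv_h^{\star}-\bu_h$ and $\bzero$ and shift $\bu_h$ leaves
\[
\widetilde{\alpha}\|\bv_h^{\star}-\bu_h\|_{\bV}^2 \leq a(\bv_h^{\star}-\bu,\bv_h^{\star}-\bu_h)+c(\bv_h^{\star};\bv_h^{\star},\bv_h^{\star}-\bu_h)-c(\bu;\bu,\bv_h^{\star}-\bu_h).
\]
The Lipschitz bound \eqref{eq:lip} together with the parameter-robust stability of Theorem~\ref{th:continuous} and Theorem~\ref{th:discrete} (which controls $\|\bu\|_{\bV}+\|\bv_h^{\star}\|_{\bV}$ by the data alone) then delivers $\|\bv_h^{\star}-\bu_h\|_{\bV}\leq C\|\bu-\bv_h^{\star}\|_{\bV}$, and a triangle inequality followed by the infimum over $\bv_h\in\bV_h$ yields $\|\bu-\bu_h\|_{\bV}\leq C\,\mathrm{dist}(\bu,\bV_h)$ with $C$ independent of $h,\kappa,\rF$.

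\emph{Pressure estimate and main obstacle.} For arbitrary $q_h\in\rQ_h$, applying the discrete inf-sup (Lemma~\ref{eq:discrte_inf-sup}) to $p_h-q_h$ and expanding the numerator via the first orthogonality gives
\[
b_h(\bw_h,p_h-q_h)=a(\bu-\bu_h,\bw_h)+\bigl[c(\bu;\bu,\bw_h)-c(\bu_h;\bu_h,\bw_h)\bigr]+b(\bw_h,p-q_h).
\]
Inserting the velocity estimate, invoking \eqref{eq:lip}, and passing to the infimum over $q_h\in\rQ_h$ then completes \eqref{est:Cea} after a final triangle inequality. The delicate, parameter-sensitive step is establishing the uniform boundedness $|b(\bw_h,r)|\leq C\|\bw_h\|_{\bV}\|r\|_{\widehat{\rQ}(\cT_h)}$ with $C$ independent of $\kappa$ and $\rF$ when $r=p-q_h$ lives in the broken sum space \eqref{eq:broken.Qhat}: while the $\rL^2$ component of any decomposition is immediate by Cauchy--Schwarz, the $\kappa^{\frac12}\rH^1_{\Gamma_p}(\cT_h)$ and $\rF^{-\frac13}\rW^{1,\frac32}_{\Gamma_p}(\cT_h)$ components require an element-wise integration by parts whose interface contributions must be absorbed using the $\bH(\vdiv)$-conformity of $\bw_h$ and the discrete gradient identity \eqref{eq:disc.grad}, reproducing the $\kappa^{\frac12}$ and $\rF^{-\frac13}$ scalings that define $\|\cdot\|_{\widehat{\rQ}(\cT_h)}$. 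This is exactly the point where the weighted sum-space structure of $\widehat{\rQ}(\cT_h)$ must interact cleanly with the Raviart--Thomas degrees of freedom.
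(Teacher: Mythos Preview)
Your proposal follows essentially the same route as the paper: construct an element of the affine kernel $\bZ_h^g$, apply the uniformly strong monotonicity \eqref{eq:monotone.disc} with the appropriate shift, feed in the Galerkin orthogonality and Lipschitz bound \eqref{eq:lip} for the velocity, and then recover the pressure via the discrete inf-sup of Lemma~\ref{eq:discrte_inf-sup}. One stylistic difference: you immediately use that $\bv_h^{\star}-\bu_h\in\bZ_h\subseteq\bZ$ has zero divergence to annihilate the term $b(\bv_h^{\star}-\bu_h,p-p_h)$, obtaining a pressure-independent velocity bound $\|\bu-\bu_h\|_{\bV}\lesssim\mathrm{dist}(\bu,\bV_h)$. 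The paper, by contrast, retains that $b_h$ term in the Theorem, bounds it by $\|p-p_h\|_{\widehat{\rQ}(\cT_h)}$, and only later states the pressure-independent version separately as a Corollary. Your route is the cleaner one and avoids an apparent circularity in the paper's Theorem proof (where $\|p-p_h\|_{\widehat{\rQ}(\cT_h)}$ is replaced by $\mathrm{dist}(p,\rQ_h)$ before the pressure estimate is established).

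You are right that the parameter-robust boundedness $|b(\bw_h,r)|\leq C\|\bw_h\|_{\bV}\|r\|_{\widehat{\rQ}(\cT_h)}$ is the delicate point; the paper simply invokes ``the boundedness of $b_h(\bullet,\bullet)$'' and the estimate $\mathrm{dist}(\bu,\bZ_h^g)\leq(1+\beta_d^{-1})\mathrm{dist}(\bu,\bV_h)$ (citing \cite[Theorem~2.6]{gatica2014simple}) without elaboration. Be aware, however, that the \emph{same} boundedness is already hidden in your velocity step: the claimed bound $\|\bz_h\|_{\bV}\leq\beta_d^{-1}\|\bu-\bv_h\|_{\bV}$ requires controlling the functional $s_h\mapsto b_h(\bu-\bv_h,s_h)$ in the dual norm $\rQ_h'$, which is exactly the intersection of the three weighted duals and hence needs the integration-by-parts argument you outline for the pressure. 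So the obstacle you identify at the end in fact appears twice, and resolving it once suffices for both occurrences.
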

\begin{proof} Now, define  
\begin{align}\label{eqn:Zhg}
  \bZ_h^g:=\{\bv_h\in\bV_h:b_h(\bv_h,q_h)=G(q_h)\quad \forall q_h\in {\rQ}_h\}.  
\end{align} 
Note that $\bu_h\in\bZ_h^g$ and $\bu_h-\bu_h^g\in\bZ_h$ for all $\bu_h^g\in \bZ_h^g$.
From triangle inequality, an application of uniformly strong monotonicity of \({a(\bullet,\bullet)+c(\bullet;\bullet,\bullet)}\) to the pair $(\bu_h-\bu_h^g,\bzero)\in \bZ_h\times\bZ_h$, and choosing $\bw_h = \bu_h^g$ in \eqref{eq:monotone.disc}, we arrive at 
\[\widetilde{\alpha}\|\bu_h-\bu_h^g\|^2_{\bV}\leq {a(\bu_h,\bu_h-\bu_h^g)+c(\bu_h;\bu_h,\bu_h)-a(\bu_h^g,\bu_h-\bu_h^g)-c(\bu_h^g,\bu_h-\bu_h^g)}.\]
Further, adding and subtracting \({a(\bu,\bu_h-\bu_h^g)+c(\bu;\bu,\bu_h-\bu_h^g)}\) to the right-hand side of the above inequality, the following bound is obtained
\begin{align}\label{eqn:uh-uhg}
\widetilde{\alpha}\|\bu_h-\bu_h^g\|^2_{\bV}& \leq a(\bu,\bu_h-\bu_h^g)+c(\bu;\bu,\bu_h-\bu_h^g)-a(\bu_h^g,\bu_h-\bu_h^g)\nonumber\\
&\quad -c(\bu_h^g;\bu_h^g,\bu_h-\bu_h^g)+a(\bu_h,\bu_h-\bu_h^g)+c(\bu_h;\bu_h,\bu_h-\bu_h^g)\nonumber\\&\quad -a(\bu,\bu_h-\bu_h^g)-c(\bu;\bu,\bu_h-\bu_h^g).
\end{align}
Next, from \eqref{eq:1st.Galerkin}, we observe that  \begin{align}\label{eqn:A=B}
a(\bu,\bu_h-\bu_h^g)+c(\bu;\bu,\bu_h-\bu_h^g)-a(\bu_h,\bu_h-\bu_h^g)\nonumber&-c(\bu_h;\bu_h,\bu_h-\bu_h^g)\\& =b_h(\bu_h-\bu_h^g,p-p_h).
\end{align}
Then, combining \eqref{eqn:uh-uhg}, \eqref{eqn:A=B}, the local Lipschitz continuity of \({a(\bullet,\bullet)+c(\bullet;\bullet,\bullet)}\), and the boundedness of \({b_h(\bullet,\bullet)}\), we arrive at 
\begin{align*}
    \widetilde{\alpha}\|\bu_h-\bu_h^g\|_{\bV}\leq  \|p-p_h\|_{{\widehat{\rQ}(\cT_h)}}+ (1+\|\bu\|_{\bV}+\|\bu_h^g\|_{\bV})\|\bu-\bu_h^g\|_{\bV}.
\end{align*}
Finally, a use of the triangle inequality yields the estimate for the velocity error as
\begin{align}
\|\bu-\bu_h\|_{\bV}&\leq\frac1{\widetilde{\alpha}}\|p-p_h\|_{{\widehat{\rQ}(\cT_h)}}+ \frac1{\widetilde{\alpha}}((1+\widetilde{\alpha})+\|\bu\|_{\bV}+\|\bu_h^g\|_{\bV})\|\bu-\bu_h^g\|_{\bV}\nonumber\\
&\leq C_g(\mathrm{dist}(p,{\rQ}_h)+\mathrm{dist}(\bu,\bV_h)),
\end{align}
where \(C_g := 
\revone{\frac1{\widetilde{\alpha}}(1+\frac1{\beta_d})
((1+\widetilde{\alpha})+\|\bu\|_{\bV}+\|\bu_h^g\|_{\bV})}
\), which is obtained by using that \(\mathrm{dist}(\bu,\bZ_h^g)\leq (1+\frac1{\beta_d})\,\mathrm{dist}(\bu,\bV_h)\) (see for instance, \cite[Theorem 2.6]{gatica2014simple}).

Now, for the pressure estimate, we use the inf-sup stability of \(b_h(\bullet,\bullet).\) Let \(p_h^*\in {\rQ}_h\)
 be an arbitrary element. Then, there holds 
 \begin{align*}
   & \beta_d\|p_h-p_h^*\|_{{{\widehat{\rQ}}(\cT_h)}}\leq\sup_{\bzero\neq\bv_h\in\bV_h}\frac {b_h(\bv_h,p_h-p_h^*)}{\|\bv_h\|_{\bV}}\\
    &\quad =\sup_{\bzero\neq\bv_h\in\bV_h}\frac{b_h(\bv_h,p-p_h^*)+b_h(\bv_h,p_h-p)}{\|\bv_h\|_{\bV}}\\
    &\quad =\sup_{\bzero\neq\bv_h\in\bV_h}\frac{b_h(\bv_h,p-p_h^*)+\revone{a(\bu_h-\bu,\bv_h) + c(\bu_h;\bu_h,\bv_h) - c(\bu;\bu,\bv_h)}}{\|\bv_h\|_{\bV}}\\
    &\quad \leq\|p-p_h^*\|_{{\widehat{\rQ}(\cT_h)}}+(1+\|\bu\|_{\bV}+\|\bu_h^g\|_{\bV})\|\bu-\bu_h\|_{\bV},
\end{align*}
\revone{where we have used the boundedness of $b_h(\bullet,\bullet)$ and $a(\bullet,\bullet)$, the local Lipschitz continuity of the map $\bu \mapsto |\bu|^{r-2}\bu$, and the boundedness of the discrete solution. Then we choose $p_h^*$ as the best approximation, use triangle inequality, and employ the velocity estimate to obtain}
\begin{align}
    \|p-p_h\|_{{\widehat{{\rQ}}(\cT_h)}}\leq \widetilde{C}_g(\mathrm{dist}(p,{{\rQ}_h})+\mathrm{dist}(\bu,\bV_h)),
\end{align}
where \(\widetilde{C}_g:=
\revone{1+\frac1{\beta_d}+C_g^{\frac12}(1+\|\bu\|_{\bV}+\|\bu_h^g\|_{\bV})}
\). This concludes the proof.
\end{proof}
Additionally, if  \(\bZ_h\subseteq \bZ,\) then, we obtain the following pressure-independent estimate (cf. \cite[Theorem B.2]{gatica2014simple}) for the velocity.
\begin{corollary}\label{cor:45}
Under the assumptions of Theorem~\ref{th:cea}, there holds for any \(\bu_h^g\in\bZ_h^g\)
\begin{equation}\label{est:Cea_pressure.independent}
    \|\bu-\bu_h\|_{\bV}\leq C_D \,\mathrm{dist}(\bu,\bV_h),
\end{equation}
where \(\revone{0<C_D:=\frac1{\widetilde{\alpha}}((1+\widetilde{\alpha})+\|\bu\|_{\bV}+\|\bu_h^g\|_{\bV})(1+\frac1{\beta_d})<\infty}\) is 
independent of \(h,\kappa,\) and \(\rF\). 
\end{corollary}
\begin{proof}
From the characterisation of \(\bZ_h\), there holds $b_h(\bv_h,q-q_h)=0$ for all $\bv_h\in \bZ_h$. 
Then, from \eqref{eq:2nd.Galerkin}, it is evident that \(\bu_h\in\bZ_h^g\). Now, let \(\bu_h^g\) be an arbitrary, but fixed element in \(\bZ_h^g\), then   \(\bu_h-\bu_h^g\in \bZ_h\). A use of the triangle inequality shows
\[\|\bu-\bu_h\|_{\bV}\leq\|\bu-\bu_h^g\|_{\bV}+\|\bu_h-\bu_h^g\|_{\bV}.
\]
Apply the uniformly strong monotonicity of $a(\bullet,\bullet)+c(\bullet;\bullet,\bullet)$ to the pair $(\bu_h-\bu_h^g,\bzero)\in \bZ_h\times\bZ_h$ and also take $\bw_h = \bu_h^g$ in \eqref{eq:monotone.disc} to arrive at 
\begin{align}\label{Vhg}
    &\widetilde{\alpha}\|\bu_h-\bu_h^g\|^2_{\bV}\\
  \nonumber  &\revone{\leq} 
    a(\bu_h,\bu_h-\bu_h^g)+c(\bu_h;\bu_h,\bu_h-\bu_h^g)-a(\bu_h^g,\bu_h-\bu_h^g)-c(\bu_h^g;\bu_h^g,\bu_h-\bu_h^g).
\end{align}
Observe that, by adding and subtracting \(a(\bu,\bu_h-\bu_h^g)+c(\bu;\bu,\bu_h-\bu_h^g)\) on the right-hand side of \eqref{Vhg}, and using \eqref{eq:weak,1} and \eqref{eq:1st.Galerkin}, as well as the fact that \(b_h(\bu_h-\bu_h^g,q_h)=0\) for all \(q_h\in {\rQ}_h,\) we deduce the following:
\begin{align*}
     &\widetilde{\alpha}\|\bu_h-\bu_h^g\|_{\bV}\\
     &\leq b_h(\bu_h,\bu_h^g,p-p_h)+a(\bu,\bu_h-\bu_h^g)+c(\bu;\bu,\bu_h-\bu_h^g)-a(\bu_h^g,\bu_h-\bu_h^g)\\&\qquad -c(\bu_h^g;\bu_h^g,\bu_h-\bu_h^g)\\
     &=a(\bu,\bu_h-\bu_h^g)+c(\bu;\bu,\bu_h-\bu_h^g)-a(\bu_h^g,\bu_h-\bu_h^g)-c(\bu_h^g;\bu_h^g,\bu_h-\bu_h^g)\\
     &\leq (1+\|\bu\|_{\bV}+\|\bu_h^g\|_{\bV})\|\bu-\bu_h^g\|_{\bV},
\end{align*}
where we have used that \(\bZ_h\subseteq\bZ\) in the second step, and the local Lipschitz property of \(a(\bullet,\bullet)+c(\bullet;\bullet,\bullet)\) in the third step. 
Finally, a use of the triangle inequality yields 
\[\|\bu-\bu_h\|_{\bV}\leq\frac1{\widetilde{\alpha}}((1+\widetilde{\alpha})+\|\bu\|_{\bV}+\|\bu_h^g\|_{\bV})\mathrm{dist}(\bu,\bZ_h^g).\]
Recalling from \cite[Theorem 2.6]{gatica2014simple} that  \(\mathrm{dist}(\bu,\bZ_h^g)\leq (1+\frac1{\beta_d})\mathrm{dist}(\bu,\bV_h)\),where \(\beta_d\) is the discrete inf-sup constant, we achieve the desired pressure-robustness.
\end{proof}

Let us remark that by just considering the spaces \(\mathbf{H}^r_{\Gamma_{\bu}}({\vdiv},\Omega)\) for the velocity and \(\rL^2(\Omega)\) for the pressure and following the approach of \cite{pan2012mixed} one can obtain the pressure and velocity  error estimates separately, but they are not necessarily robust with respect to the parameters \(\kappa, \rF\).

In order to provide a theoretical rate of convergence for the mixed FEM \eqref{eq:weak_Galerkin}, we recall the approximation properties of the subspaces involved. 

\medskip 
\revtwo{Let $t \in (1,\infty)$ and denote by  \(\Pi_h:\bW^{l,t}(\Omega)\cap\bH(\vdiv_{1,t},\Omega)\to\mathbb{RT}_k(\cT_h)\)  the Fortin operator and by \(P_h:\rW^{l,t}(K)\to \mathbb{P}_k(K)\),  the local orthogonal \(\rL^2\) projection. Then, as a standard consequence of the Bramble--Hilbert lemma, optimal approximation, and inverse inequalities on polynomial spaces, we can assert that} 
\begin{subequations}
     \begin{align}
\label{eq:approx-v}         &  \revtwo{\|\bv- \Pi_h \bv\|_{m,t,\Omega}\leq C_1 h^{l-m}|\bv|_{l,t,\Omega}\quad \forall \bv\in \mathbf{W}^{l,t}(\Omega), \ 1\leq l \leq k+1,}\\
  \label{eq:approx-div}  & \revtwo{ \|\vdiv(\bv-\Pi_h\bv)\|_{m,t,\Omega}\leq C_2h^{l-m}|\vdiv\bv|_{l,t,\Omega} \quad \forall \bv \in \mathbf{W}^{1,t}(\Omega),\ \text{with}}\\
    \nonumber           & \qquad \qquad \qquad \qquad \qquad \qquad \qquad \qquad \qquad  \revone{\vdiv(\bv)\in \rW^{1,t}(\Omega)\ \text{and} \ 0\leq l \leq k+1,}\\
  \label{eq:approx-p}  & \revtwo{\|q-P_hq\|_{m,t,\Omega}\leq C_2 h^{l-m}|p|_{l,t,\Omega}\quad \forall q\in \rW^{l,t}(\Omega),\ 0\leq l \leq k+1,} \\
    \label{eq:approx-p-K}  & \revtwo{\|\nabla(q-P_hq)\|_{0,t,{K}}\leq C_3 h_K^{l}|p|_{l+1,t,K}\quad \forall q\in \rW^{l+1,t}({K}),\  K\in\mathcal{T}_h,\ l \leq k,} 
     \end{align}
     \end{subequations}
     for $ 0\leq m\leq l$, where \revone{$C_1,C_2>0$ are  independent of \(h\) and $C_3>0$ is independent of $h_K$ 
(see \cite[Sections 4.1 and 4.5]{gatica2022p} for the first three, and \cite{ern2021finite} for the fourth  one)}.  

To conclude this section, the following theorem provides the theoretical rate of convergence 
for 
\eqref{eq:weak}, under suitable regularity assumptions on the exact solution. 
   \begin{theorem}[Convergence rates]\label{th:convergence}
        Let $(\bu,p) \in \bV \times \rQ$ be the solution of the continuous problem \eqref{eq:weak}  with the following \revone{additional} regularity conditions:
\begin{gather*}\bu\in \kappa^{-\frac12}\revone{\bH^{l}}(\Omega)\cap\revone{\rF^{\frac1r}\mathbf{W}^{l,r}(\Omega)},  \quad \vdiv\bu\in \revone{\rH^l}(\Omega), \quad \text{and} \quad \revtwo{p=p_1+p_2+p_3}, \\
\text{with} \quad \revtwo{p_1\in \rH^{l}(\Omega), \quad p_2\in \kappa^{\frac12}\rH^{l+1}(\Omega), \quad \text{and}\quad  p_3 \in \rF^{-\frac1r}\rW^{l+1,r'}(\Omega)},\end{gather*} where \revone{\(1\leq l\leq k+1\).  
Then, there exist  positive constants \(C_\sharp,C_\flat>0\)}, independent of \(h, \kappa\), and \(\rF\), such that 
\begin{align*}
  \|\bu-\bu_h\|_{\bV}+\|p-p_h\|_{\widehat{\rQ}(\cT_h)}&\leq 
  \revone{C_\sharp h^{l}\bigl[\kappa^{-\frac12}\|\bu\|_{l,\Omega}+\|\vdiv\bu\|_{l,\Omega}+\rF^{\frac1r}\|\bu\|_{l,r,\Omega}\bigr]}\\
  &\quad + \revone{C_\flat h^{l}\bigl[ |p_1|_{l,\Omega}+\kappa^{\frac12}|p_2|_{l+1,\Omega}+\rF^{-\frac1r}|p_3|_{l+1,r',\Omega}\bigr]}.
\end{align*}
    \end{theorem}
    \begin{proof}
    \revtwo{
    We first take $m=0$ in \eqref{eq:approx-v}, leading to  
\[ \| \bu - \Pi_h \bu\|_{0,t,\Omega} \leq C_1 h^l |\bu|_{l,t,\Omega}, \]
    and we use this bound with $t\in \{2,r\}$. We also take $m=0$ and $t=2$ in \eqref{eq:approx-div},   which gives 
    \[ \|\vdiv (\bu - \Pi_h \bu)\|_{0,\Omega} \leq C_2 h^l |\bu|_{l,\Omega},\]
and select $ 1\leq l \leq k+1 $ for the last two displayed estimates. Therefore, owing to  Corollary~\ref{cor:45}, we arrive at the velocity error bound  
\[ \|\bu - \bu_h\|_{\bV}  \leq C_D \mathrm{dist}(\bu,\bV_h) \leq C_DC_1^2C_2 h^l (\kappa^{-\frac12}\|\bu\|_{l,\Omega} + \|\vdiv\bu\|_{l,\Omega} + \rF^{\frac1r}\|\bu\|_{l,r,\Omega}).\]
For the pressure we employ the  decomposition $p=p_1+p_2+p_3$, the properties of the sum norm \eqref{def:Qhat-norm}, apply  \eqref{eq:approx-p} with $m=0$ and  $t=2$, and \eqref{eq:approx-p-K} once with   $t=2$, and once with   $t=r'$, both summed over $K\in \cT_h$. We combine that with the following trace inequality to control the jump term in the broken norms
\[h_F^{1-t} \|[\![p_j - P_h p_j]\!] \|^t_{0,t,F} =h_F^{1-t} \| [\![P_h p_j]\!]\|^t_{0,t,F} \leq C_4 h^{lt}|p_j|^t_{l+1,r,\omega_F}, \]
where $\omega_F$ stands for the usual face patch.  We apply this for $j=2,3$ with $t=2$ and $t=r'$), and we readily obtain   
\begin{align*} 
\|p - P_h p\|_{\widehat{\rQ}(\cT_h)} & \leq \| p_1 - P_hp_1\|_{0,\Omega} + \kappa^{\frac12}|p_2-P_hp_2|_{1,\cT_h} + \rF^{-\frac1r}|p_3-P_hp_3|_{1,r',\cT_h} \\
&\leq \max\{C_2,C_3,C_4\} h^{l}\bigl(|p_1|_{l,\Omega} +  \kappa^{\frac12}|p_2|_{l+1,\Omega} +  \rF^{-\frac1r}|p_3|_{l+1,r',\Omega} \bigr).
\end{align*}
Then we can invoke directly the quasi-optimality estimate \eqref{est:Cea}, to get the desired result with 
$C_\sharp = C_D(2C_1+C_2)$ and $ C_{\flat} = \widetilde{C}_g\max\{C_2,C_3,C_4\}$.}
Since the last two contributions of the pressure norm  require derivatives, the $\rL^2$ projection only delivers theoretical approximability if $k\geq 1$ or if 
  the infimum decomposition can be chosen such that $p = p_1$ (and $p_2 = p_3 =0$), 
i.e., if the optimal decomposition lives entirely in the $\rL^2$ component. 
\end{proof}
    
\section{Preconditioning for the linearised Darcy--Forchheimer equations}\label{sec:precond}
In this section, we define a variable operator preconditioner for the Newton linearisation of  \eqref{eq:weak}. Using the notation from  Section 3, we first rewrite such system in operator form as
\begin{equation}\label{eq:operator-form}
    \begin{pmatrix}
        A + C & B^* \\ B & 0 \end{pmatrix}\begin{pmatrix}
            \bu \\ p 
        \end{pmatrix} = \begin{pmatrix}
            F \\ G
        \end{pmatrix} \qquad \text{in $\bV'\times \rQ'$}.
\end{equation}

Let us denote by \(\mathcal{H}(\bu)\) the G\^ateaux derivative of the nonlinear operator $C$, i.e., \(|\bu|^{r-2}\bu\) at the state \(\bu\) in the direction of \(\delta\bu\). In addition, consider now the Newton--Raphson linearisation of \eqref{eq:operator-form}, which starting from an initial guess $\bu^0,p^0$, and iterating on  $m = 1, \ldots$ until convergence, seeks velocity and pressure increments $(\delta \bu, \delta p)$ in a new space 
\begin{align}\label{eq:linearised-spaces}\underline{\bV}\times \underline{\rQ} &:=\bigl[\kappa^{-\frac12}\bL^2(\Omega)\cap\bH_{\Gamma_{\bu}}(\vdiv,\Omega)\cap\revone{[\rF\mathcal{H}(\revone{\bu^m})]^\frac1r\bL^r(\Omega)}\bigr]\nonumber\\ &\quad \times \bigl[\rL^2(\Omega)+\kappa^\frac12\rH_{\Gamma_p}^1(\Omega)+\revone{[\rF\mathcal{H}(\revone{\bu^m})]^{-\frac1r}\rW_{\Gamma_p}^{1,r'}(\Omega)}\bigr]
,\end{align} 
such that 
 \begin{equation}\label{eq:newton} 
\begin{pmatrix}\kappa^{-1}\mathcal{I}+\rF 
\mathcal{H}({\bu}^m)\mathcal{I} & B^* \\
    B  & 0\end{pmatrix}\begin{pmatrix}\delta \bu \\ \delta p\end{pmatrix}= \begin{pmatrix}
      F - \{A+C\}({\bu}^m) - B^*({p}^m) \\ G - B({\bu}^m)  
    \end{pmatrix} \quad \text{in $\underline{\bV}'\times \underline{\rQ}'$}. \end{equation}
Here, $\mathcal{I}$ denotes the identity operator  at the continuous level, but for the discrete case it stands for the corresponding mass matrix (either for velocity or pressure). Further, we note that, in weak form, the action of the operator $\rF\mathcal{H}({\bu}^m)\mathcal{I}:\underline{\bV}\to \underline{\bV}'$ reads
\[ \langle \rF \mathcal{H}({\bu}^m)(\delta \bu), \bv \rangle = \int_\Omega \rF |\bu^m|^{r-2}\delta\bu \cdot \bv + \int_\Omega \rF(r-2) |\bu^m|^{r-4}(\bu^m \cdot \delta\bu) (\bu^m \cdot \bv). \]
Then, update the solution as \((\bu^{m+1},p^{m+1}) = (\bu^m + \delta \bu, p^m + \delta p)\). 
Next Lemma is on the solvability of the system \eqref{eq:newton}.
\begin{lemma}\label{lem:solv-jac}
 Assume that  for any $\bw \in \underline{\bV},$ the function $\rF\mathcal{H}(\bw)$ is in {$\bL^{\infty}(\Omega)$} and that $\rF\mathcal{H}(\bw)(\bx) \geq C > 0$ a.e. in $\Omega$. Then, the problem \eqref{eq:newton} has a unique solution in $\underline{\bV} \times \underline{\rQ}$.  
\end{lemma}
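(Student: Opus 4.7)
The plan is to recognise \eqref{eq:newton} as a \emph{linear} symmetric saddle-point problem posed in reflexive Banach spaces and invoke the Banach-space Brezzi framework (the linear counterpart of Theorem~\ref{th:abstract}, where the monotonicity hypothesis collapses to coercivity on the kernel). Setting $\tilde a(\delta\bu,\bv) := (\kappa^{-1}\delta\bu,\bv)_{0,\Omega} + \langle \rF\mathcal{H}(\bu^m)\delta\bu,\bv\rangle$ and reusing the form $b(\bv,q) = \int_\Omega q\,\vdiv\bv$ already studied in Section~\ref{sec:continuous}, I would verify in turn the boundedness of $\tilde a$, its coercivity on $\ker B$, and the inf-sup condition for $B$ on $\underline{\bV}\times\underline{\rQ}$.

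For boundedness, the $\kappa^{-1}$ contribution is immediately controlled by $\|\delta\bu\|_{\kappa^{-\frac12}\bL^2}\|\bv\|_{\kappa^{-\frac12}\bL^2}$, while the Newton Hessian piece $\int_\Omega \rF|\bu^m|^{r-2}\delta\bu\cdot\bv+\int_\Omega \rF(r-2)|\bu^m|^{r-4}(\bu^m\cdot\delta\bu)(\bu^m\cdot\bv)$ is bounded by Cauchy--Schwarz together with the assumption $\rF\mathcal{H}(\bu^m)\in\bL^{\infty}(\Omega)$, giving a bound of the form $C\bigl(\kappa^{-1}+\|\rF\mathcal{H}(\bu^m)\|_{\infty}\bigr)\|\delta\bu\|_{0,\Omega}\|\bv\|_{0,\Omega}$, and hence continuity on $\underline{\bV}\times\underline{\bV}$.

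For coercivity, the pointwise non-negativity of $\rF\mathcal{H}(\bu^m)$ (which reflects that this operator is the Jacobian of the monotone map $\bu\mapsto\rF|\bu|^{r-2}\bu$, hence symmetric and positive semidefinite) yields
\[ \tilde a(\bv,\bv)\ \geq\ \kappa^{-1}\|\bv\|_{0,\Omega}^{2}+\langle \rF\mathcal{H}(\bu^m)\bv,\bv\rangle\ \geq\ \kappa^{-1}\|\bv\|_{0,\Omega}^{2}. \]
On $\ker B=\{\bv\in\underline{\bV}:\vdiv\bv=0\}$ this already dominates the $\kappa^{-\frac12}\bL^2$ and $\bH(\vdiv)$ parts of $\|\bv\|_{\underline{\bV}}$; the $\{\rF\mathcal{H}(\bu^m)\}^{\frac13}\bL^3$ piece is then controlled by invoking the $\bL^{\infty}$ bound on the weight together with the boundedness of $\Omega$, which renders the $\bL^3$-weighted component equivalent (up to a constant depending on $\|\rF\mathcal{H}(\bu^m)\|_{\infty}$) to the $\bL^2$-weighted one.

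The inf-sup condition for $B$ is obtained by adapting Lemma~\ref{lem:inf-sup}: exactly the same three auxiliary problems (the $\bH(\vdiv)$--$\rL^2$ mixed system and the two mixed-boundary Poisson-type problems in $\rW^{1,2}_{\Gamma_p}$ and $\rW^{1,\frac32}_{\Gamma_p}$) are used to build a lifting operator $\cS$, with the constant weight $\rF^{\pm\frac13}$ replaced by the variable $\{\rF\mathcal{H}(\hat\bu)\}^{\pm\frac13}$; the $\bL^{\infty}$ and a.e.\ positivity assumptions guarantee continuity of $\cS$ with a constant independent of the variable weight. Combining these three properties in the linear Banach-space Brezzi theorem then yields existence and uniqueness of $(\delta\bu,\delta p)\in\underline{\bV}\times\underline{\rQ}$. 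The main technical obstacle is the compatibility between the $\bL^2$-type coercivity naturally produced by the Newton Hessian and the $\bL^3$-type weighted piece appearing in the definition of $\underline{\bV}$: reconciling the two is exactly where the $\bL^{\infty}$ hypothesis on $\rF\mathcal{H}(\bw)$ is indispensable.
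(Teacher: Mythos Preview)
Your approach is the same as the paper's: apply the classical Babu\v{s}ka--Brezzi theory to the form $\tilde a(\delta\bu,\bv):=a(\delta\bu,\bv)+\langle\rF\mathcal{H}(\bu^m)\delta\bu,\bv\rangle$, verifying boundedness, coercivity on $\ker B$, and the inf-sup condition for $b$ by rescaling the lifting construction from Lemma~\ref{lem:inf-sup}. The paper is even terser than you are and asserts, in addition, that all constants are independent of $\kappa,\rF,\mathcal{H}(\bu^m)$, whereas yours depend on $\|\rF\mathcal{H}(\bu^m)\|_\infty$; for the lemma as stated (mere existence and uniqueness) this distinction is immaterial.

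There is, however, a genuine gap in your coercivity argument. You claim that the $\bL^\infty$ bound on the weight together with the boundedness of $\Omega$ ``renders the $\bL^3$-weighted component equivalent\dots\ to the $\bL^2$-weighted one''. This is false: on a bounded domain one has $\bL^3(\Omega)\hookrightarrow\bL^2(\Omega)$ but not the reverse, and there exist divergence-free sequences with $\|\bv_n\|_{0,\Omega}$ bounded while $\|\bv_n\|_{0,3,\Omega}\to\infty$. Hence the lower bound $\tilde a(\bv,\bv)\ge\kappa^{-1}\|\bv\|_{0,\Omega}^2$ that you obtain after discarding the non-negative Hessian term cannot control the full $\|\bv\|_{\underline{\bV}}^2$ on $\ker B$. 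You correctly flag this as ``the main technical obstacle'', but the proposed resolution does not work: the weighted-$\bL^3$ portion of the norm must be handled directly through the retained Hessian contribution $\langle\rF\mathcal{H}(\bu^m)\bv,\bv\rangle$ (the paper gestures at H\"older here), not recovered a posteriori from $\|\bv\|_{0,\Omega}$ alone.
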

\begin{proof}
We use the classical Babu\v{s}ka--Brezzi theory for saddle-point problems. 
The bilinear form $\tilde{a}:\underline{\bV}\times\underline{\bV}\to\mathbb{R}$ defined as 
    \[ \tilde{a}(\bu,\bv) := a(\bu,\bv) + \langle \rF \mathcal{H}({\bu}^m)(\bu), \bv \rangle,\]
    is bounded in $\underline{\bV}$ using the H\"older's inequality and coercive in $\underline{\bZ} = \mathrm{ker} B$; all with constants independent of $\kappa$, $\rF$, and $\mathcal{H}(\bu^m).$ Finally, the boundedness and inf-sup condition on $B$ have been shown in the proof of Theorem~\ref{th:continuous} for a different weight. In this case, we only need to alter the scaling, \revtwo{thus completing} the proof.
\end{proof}
\revtwo{Under the assumption in Lemma~\ref{lem:solv-jac}, the weighted $r-$Laplacian defines a coercive and bounded operator on $\rW^{1,r}_{\Gamma_p}(\Omega)$ with constants independent of the discretisation parameters. 
From a practical standpoint, this corresponds to freezing the nonlinear coefficient at the current iterate and using a uniformly elliptic linear operator at each step. In all numerical experiments reported in Section~\ref{sec:numerics}, the iterates remain in a regime where these bounds are satisfied, and no degeneracy or loss of stability is observed.}

The spaces defined in \eqref{eq:linearised-spaces} induce a duality map that can be used for preconditioning the system \eqref{eq:newton}, which is discussed below.
\begin{theorem}[Local preconditioner for the tangent system]\label{lem:precond_Mardal}
Let  
   \((\revone{\bu^m},\revone{p^m})\in\underline{\bV}\times \underline{\rQ}\) be the state around which the Newton--Raphson linearisation is performed. 
    Then the following block diagonal operator  \begin{align}\label{precond.linear}\begin{pmatrix}
        \mathcal{P}:=(\kappa^{-1}\mathcal{I}+\rF 
        {\mathcal{H}(\revone{\bu^m})} {\mathcal{I}}-\nabla\vdiv)^{-1} & 0\\0 &\!\!\!\! \!\!\!\!\!\!\!\mathcal{Q}:=\mathcal{I}^{-1}+ {\bigl(-\kappa\Delta\bigr)^{-1}-\bigl([\rF{\mathcal{H}(\revone{\bu^m})}]^{r-4}{\Delta_{\revone{r'}}}\bigr)^{-1}}
    \end{pmatrix},\end{align} 
   is a parameter-robust preconditioner for  \eqref{eq:newton}, in the sense that the condition numbers of the preconditioned matrix are uniformly bounded in \(\kappa\) and \(\rF\). Here, 
making abuse of notation, by $[\rF{\mathcal{H}(\revone{\bu^m})}]^{r-4}{\Delta_{r'}}$ we denote the (singular) weighted p-Laplace operator associated with the space \revone{$\rW^{1,r'}_{\Gamma_p}(\Omega)$}, defined as  
  \[\revone{\langle [\rF{\mathcal{H}(\revone{\bu^m})}]^{r-4}\Delta_{r'}\phi,\psi\rangle := \int_\Omega [\rF{\mathcal{H}(\revone{\bu^m})}]^{r-4}|\nabla\phi|^{r'-2}\nabla \phi \cdot \nabla \psi, \qquad \forall \psi \in \rW^{1,r}(\Omega).} \] 
\end{theorem}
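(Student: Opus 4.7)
The plan is to verify the claim using the operator preconditioning framework of \cite{mardal2011preconditioning}, extended to the Banach/variable--coefficient setting as in \cite{schoberl2007symmetric,boon2021robust}. The argument reduces to two tasks: (i) show that the Jacobian $J$ appearing in \eqref{eq:newton} is an isomorphism from $\underline{\bV}\times\underline{\rQ}$ onto $\underline{\bV}'\times\underline{\rQ}'$ with bounds independent of $\kappa$, $\rF$ and the linearisation state $\hat{\bu}$; and (ii) show that the block diagonal operator $\mathcal{P}\oplus\mathcal{Q}$ in \eqref{precond.linear} is, up to universal constants, the duality map between these spaces and their duals, with the same uniformity. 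Composition then implies that $(\mathcal{P}\oplus\mathcal{Q})\,J$ is spectrally equivalent to the identity on $\underline{\bV}\times\underline{\rQ}$, which is the content of parameter--robust conditioning.

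For the velocity block I identify $\mathcal{P}^{-1}=\kappa^{-1}\mathcal{I}+\rF\mathcal{H}(\hat{\bu})\mathcal{I}-\nabla\vdiv$ with the Riesz isomorphism induced on $\underline{\bV}$ by the weighted Hilbert norm
\[
\|\bv\|^2_{\underline{\bV},\sharp} := \int_\Omega \bigl(\kappa^{-1}+\rF\mathcal{H}(\hat{\bu})\bigr)|\bv|^2 + \|\vdiv \bv\|^2_{0,\Omega}.
\]
Under the standing assumptions $\rF\mathcal{H}(\hat{\bu})\in \rL^\infty(\Omega)$ with $\rF\mathcal{H}(\hat{\bu})\geq 0$ pointwise, this is a genuine inner product, and a direct reproduction of the weighted $\bL^2$ estimates of Section~\ref{sec:continuous}, with the pointwise weight $\rF\mathcal{H}(\hat{\bu})$ replacing $\rF$, shows $\|\bullet\|_{\underline{\bV},\sharp}$ equivalent to the intersection norm of $\underline{\bV}$ uniformly in the parameters. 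Hence $\mathcal{P}$ coincides, up to parameter--uniform constants, with the inverse Riesz map $\underline{\bV}'\to \underline{\bV}$.

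For the pressure block I use the duality identity \eqref{eq:duality}, which yields $(\underline{\rQ})' = \rL^2(\Omega)\cap (\kappa^{1/2}\rH^1_{\Gamma_p}(\Omega))' \cap (\{\rF\mathcal{H}(\hat{\bu})\}^{-1/3}\rW^{1,3/2}_{\Gamma_p}(\Omega))'$. On this intersection, the duality map is spectrally equivalent to the sum of the three individual duality maps: the $\rL^2$--Riesz isomorphism $\mathcal{I}$, the weighted Laplacian $-\kappa\Delta$ with mixed boundary conditions, and the weighted $3/2$--Laplacian $[\rF\mathcal{H}(\hat{\bu})]^{-1}\Delta_{3/2}$ of the statement. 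Minty--Browder (which already underpins Theorem~\ref{th:abstract}) guarantees that the last operator is bijective from $\{\rF\mathcal{H}(\hat{\bu})\}^{-1/3}\rW^{1,3/2}_{\Gamma_p}(\Omega)$ onto its dual, and the weight $[\rF\mathcal{H}(\hat{\bu})]^{-1}$ is exactly the one dictated by the scaling in \eqref{norm:Q}. Inverting componentwise recovers the sum of inverses that defines $\mathcal{Q}$.

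With these two Riesz identifications in hand, what remains is the Babu\v{s}ka--Brezzi check for $J$: boundedness and coercivity on $\ker B$ of the $(1,1)$ block follow from the weighted $\bL^2$ bounds above, while the inf--sup condition for $B$ is obtained by reusing the lifting operator $\cS_c$ from Lemma~\ref{lem:inf-sup} with the pointwise weight $\rF\mathcal{H}(\hat{\bu})$ substituted for $\rF$ in its third component, the construction and the bounds being otherwise unaffected. I expect the main technical obstacle to be the uniform--in--$\hat{\bu}$ identification of the nonlinear third summand of $\mathcal{Q}$: the weighted $3/2$--Laplacian is not a Hilbert Riesz map, so the proof must proceed via monotonicity and hemicontinuity on $\rW^{1,3/2}_{\Gamma_p}(\Omega)$, carefully tracking how the exponent $1/3$ in \eqref{norm:Q} and the exponent $-1$ carried by the weight $[\rF\mathcal{H}(\hat{\bu})]^{-1}$ conspire to produce equivalence constants independent of $\rF$ and of the state. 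Once this equivalence is in place, combining (i) and (ii) gives the claimed uniform bound on the condition number of the preconditioned operator.
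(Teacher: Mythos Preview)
Your overall architecture---identify the Jacobian as a uniform isomorphism and the block preconditioner as the duality map, then compose---matches the operator-preconditioning paradigm the paper invokes. The substantive divergence is in how you handle the pressure block. You go through the abstract route: invoke \eqref{eq:duality} to write $\underline{\rQ}'$ as an intersection, assert that the duality map on an intersection splits as the sum of the three individual duality maps, and then invert term-by-term to land on the sum-of-inverses form of $\mathcal{Q}$. The paper instead computes concretely: it rewrites $\|q\|^2_{\underline{\rQ}}$ as an infimum over a single auxiliary field $\varphi\in\rH^1_{\Gamma_p}(\Omega)$, characterises the minimiser as the solution of the mixed-boundary elliptic problem $(\mathcal{I}-\kappa\Delta-[\rF\mathcal{H}(\hat{\bu})]^{-1}\Delta_{3/2})\varphi=q$ (solvable by Minty--Browder), substitutes back, and obtains $\|q\|^2_{\underline{\rQ}}=\langle(-\kappa\Delta-[\rF\mathcal{H}(\hat{\bu})]^{-1}\Delta_{3/2})(\mathcal{I}-\kappa\Delta-[\rF\mathcal{H}(\hat{\bu})]^{-1}\Delta_{3/2})^{-1}q,q\rangle$, from which the canonical $\mathcal{Q}$ is read off directly. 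Your approach is cleaner conceptually and lands immediately on the sum-of-inverses expression in the statement, but the step ``the duality map on the intersection is spectrally equivalent to the sum of the three individual duality maps'' is standard only in the Hilbert setting; for the Banach summand you would still owe a justification, which is precisely the obstacle you flag. The paper's Euler--Lagrange computation sidesteps that abstract claim entirely, at the cost of a slightly heavier calculation.

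One further caution on the velocity side: you assert that the Hilbert norm $\|\bv\|^2_{\underline{\bV},\sharp}=\int_\Omega(\kappa^{-1}+\rF\mathcal{H}(\hat{\bu}))|\bv|^2+\|\vdiv\bv\|^2_{0,\Omega}$ is equivalent, uniformly in the parameters, to the intersection norm of $\underline{\bV}$, whose third summand is the $\bL^3$-type quantity $\{\rF\mathcal{H}(\hat{\bu})\}^{1/3}\|\bv\|_{0,3,\Omega}$. That equivalence is not available in general (a weighted $\bL^2$ norm cannot uniformly dominate a weighted $\bL^3$ norm), and nothing in Section~\ref{sec:continuous} supplies it. The paper does not attempt this either; it simply cites \cite[Equation~7]{baerland2020observation} for the $\mathcal{P}$ block and moves on, effectively treating the linearised velocity space as the Hilbert space induced by $\mathcal{P}^{-1}$ itself rather than proving compatibility with the Banach norm of $\underline{\bV}$. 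If you keep your formulation, you should likewise frame $\mathcal{P}^{-1}$ as \emph{defining} the working norm for the tangent problem rather than as something to be shown equivalent to the $\bL^3$-weighted norm.
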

\begin{proof}
Preconditioners for the upper diagonal block in system \eqref{eq:newton} can be directly constructed following the approach of \cite[Equation 7]{baerland2020observation}. This straightforwardly leads to $\mathcal{P}$ in \eqref{precond.linear}. We now focus on showing that $\mathcal{Q}$ is a preconditioner for the pressure block. Observe that the norm for the space \(\underline{\rQ}\) in \eqref{eq:linearised-spaces} can be written as  %
    \begin{equation}\label{norm:Q_equivalent}
        \|q\|^2_{\underline{\rQ}}:=\inf_{\varphi\in\rH^1_{\Gamma_p}(\Omega)}\bigl\{\|q-\varphi\|^2_{0,\Omega}+\kappa\|\nabla\varphi\|^2_{0,\Omega}+\revone{{[\rF{\mathcal{H}(\revone{\bu^m})}]^{r-4}}\|{\nabla}\varphi\|^{r'}_{0,r',\Omega}}\bigr\}.
    \end{equation}
%
Note that the above infimum is attained by a \(\varphi\in \rH^1_{\Gamma_p}(\Omega)\) (therefore also in $\rL^2(\Omega)$ and $\rW^{1,r'}_{\Gamma_p}(\Omega)$) that satisfies  
\begin{subequations}\label{elliptic}
\begin{align}
       (\mathcal{I}-\kappa\Delta-[\rF{\mathcal{H}(\revone{\bu^m})}]^{r-4}{{\Delta}_{r'}})\varphi&=q \quad \text{in } \Omega , \\
       (\kappa \nabla \varphi + [\rF{\mathcal{H}(\revone{\bu^m})}]^{r-4}|\nabla \varphi|^{r'-2}\nabla\varphi)\cdot\bn &=0 \quad \text{on } \Gamma_{\bu},\label{elliptic 2}\\
       \varphi &= 0 \quad \text{on } \Gamma_p. \label{elliptic 3}
   \end{align}
   \end{subequations}
   
The unique solvability of \eqref{elliptic} follows from the Minty--Browder framework for monotone operators, stressing that \(q\) is from a suitable dual space (following from Sobolev embedding). 
Using \eqref{elliptic}, we obtain the following 
   \begin{align*}
   \|q\|^2_{\underline{\rQ}}&=\|q-\varphi\|^2_{0,\Omega}+\kappa\|\nabla\varphi\|^2_{0,\Omega}+[\rF{\mathcal{H}(\revone{\bu^m})}]^{r-4}\|{\nabla}\varphi\|^{r'}_{0,r',\Omega}\\
   &= \langle-\kappa\Delta\varphi-[\rF{\mathcal{H}(\revone{\bu^m})}]^{r-4}{\Delta}_{r'}\varphi,-\kappa\Delta\varphi-[\rF{\mathcal{H}(\revone{\bu^m})}]^{r-4}{\Delta}_{r'}\varphi\rangle -\kappa\langle\Delta\varphi,\varphi\rangle\\&\qquad -\langle [\rF{\mathcal{H}(\revone{\bu^m})}]^{r-4}{\Delta}_{r'}\varphi,\varphi\rangle\\
   &=\langle(-\kappa\Delta-[\rF{\mathcal{H}(\revone{\bu^m})}]^{r-4}{\Delta}_{r'})\varphi,(\mathcal{I}-\kappa\Delta-[\rF{\mathcal{H}(\revone{\bu^m})}]^{r-4}{\Delta}_{r'})\varphi\rangle.
   \end{align*} 
   Next,    we employ again \eqref{elliptic} 
   to conclude that \begin{equation}\label{can.precon}\|q\|^2_{\underline{\rQ}}=\revone{\langle(-\kappa\Delta-[\rF{\mathcal{H}(\revone{\bu^m})}]^{r-4}{\Delta}_{r'})(\mathcal{I}-\kappa\Delta-[\rF{\mathcal{H}(\revone{\bu^m})}]^{r-4}{\Delta}_{r'})^{-1}q,q\rangle}.\end{equation}
   Then, using the duality map we readily define the canonical preconditioner \(\mathcal{Q}:\underline{\rQ}'\to \underline{\rQ}\) associated with  \eqref{can.precon}  by \[\revone{\mathcal{Q}=(\mathcal{I}-\kappa\Delta-[\rF{\mathcal{H}(\revone{\bu^m})}]^{r-4}{\Delta}_{r'})^{-1}}.\] 
  Therefore,  proceeding similarly as in \cite[Section 3]{mardal2011preconditioning},  we can assert that \eqref{precond.linear} is a  \((\kappa,\rF)\)-robust block diagonal preconditioner for  system \eqref{eq:newton}.
   This concludes the proof.
\end{proof}
\revtwo{The realisation of $\mathcal{Q}$ in  \eqref{precond.linear} requires solving a nonlinear $r'$-Laplacian problem, which would indeed be computationally expensive. In practice, we do not apply the nonlinear inverse. Instead, the nonlinear coefficient is frozen at the current iterate, yielding a linear, weighted elliptic operator. The application of
$\mathcal{Q}^{-1}$   is therefore reduced to the solution of a linear problem with variable coefficients.
In the present implementation, this linear system is solved using a sparse direct solver. However, the formulation is fully compatible with optimal-complexity solvers.} 

Let us stress that it is also possible to define a simpler preconditioner (motivated by \cite{baerland2020observation}) for the linearised problem \eqref{eq:newton} by using a Schur complement approach (following, e.g., \cite{mardal2011preconditioning, hong2017parameter,vassilevski2013block}) involving the space 
\[\underline{\underline{\bV}}\times \underline{\underline{\rQ}}:=(\kappa^{-1}+\rF\mathcal{H}(\revone{\bu^m}))[\bL^2(\Omega)\cap\bH_{\Gamma_{\bu}}(\vdiv,\Omega)] \times (\kappa^{-1}+\rF\mathcal{H}(\revone{\bu^m}))^{-1}\,\rL^2(\Omega),\]
where again  \((\revone{\bu^m},\revone{p^m})\in\underline{\underline{\bV}}\times \underline{\underline{\rQ}}\) is the state around which the Newton--Raphson linearisation is performed. 
This space is associated with the block diagonal Riesz map  
\begin{align}\label{precond.linear.new}\begin{pmatrix}
        \mathcal{P}:=\bigl([\kappa^{-1}\mathcal{I}+\rF 
        {\mathcal{H}(\revone{\bu^m})}]\, ({\mathcal{I}}-\nabla\vdiv)\bigr)^{-1} & 0\\0 &\!\!\!\! \!\!\!\!\!\!\!\mathcal{Q}:=([\kappa^{-1}\mathcal{I}+\rF 
        {\mathcal{H}(\revone{\bu^m})}]^{-1}\,\mathcal{I})^{-1}
    \end{pmatrix}.\end{align} 
The parameter-robustness for this preconditioner can be easily inferred from a scaling argument as in \cite[Example 3.2]{mardal2011preconditioning}. Nevertheless this preconditioner might not be of much interest in the case of coupling with other multiphysics effects. This has been discussed in length in \cite{baerland2020observation}. \revone{Note that while in the Darcy limit the operator preconditioning framework we use coincides with that of \cite{baerland2020observation}, unfortunately we do not see a clear way to continuously extend our construction  to include the case of zero Forchheimer coefficient. This is because the scaling with negative powers of this parameter are hard-coded in the norm and preconditioning terms, and the linear algebra breaks down for $\rF=0$. However, we can take positive, arbitrarily small values of this parameter (see, for instance, the behaviour for $\rF=10^{-9}$ in Example 4 of Section~\ref{sec:numerics}).}

\section{Numerical results}\label{sec:numerics}
The purpose of this section is to experimentally validate the theoretical results presented in the previous sections. We verify that the convergence rates are robust in the proposed finite element method \revone{in 2D and 3D}, and also illustrate the performance of proposed schemes in typical porous media flow problems. \revthree{The numerical implementation is based on the open-source finite element framework \texttt{Gridap} \cite{badia22}, and the main routines can be publicly accessed from \url{https://github.com/ruizbaier/GridapRobustDarcyForchheimer.jl}. }

\begin{table}[!t]
\setlength{\tabcolsep}{2.5pt}
\begin{center}
\revone{\footnotesize\begin{tabular}{|rccccccc|}
\hline
DoF  &    $h$ & $\|\bu-\bu_h\|_{3,\vdiv,\Omega}\!$  &  \texttt{rate} &  $\|p-p_h\|_{0,\Omega}$  &  \texttt{rate} & $\|P_h(\vdiv\bu_h-g)\|_{\ell^\infty}$ &  \texttt{it} \\
\hline
\multicolumn{8}{|c|}{Errors and convergence rates for $k = 0$}\\
\hline
   144 & 0.6124 & 9.78e-01 & $\star$ & 1.80e-01 & $\star$ & 1.40e-15 & 4\\
   1152 & 0.3062 & 5.30e-01 & 0.884 & 9.61e-02 & 0.907 & 2.24e-15 & 5\\
   9216 & 0.1531 & 2.72e-01 & 0.965 & 4.88e-02 & 0.977 & 6.53e-15 & 5\\
  73728 & 0.0765 & 1.37e-01 & 0.991 & 2.45e-02 & 0.994 & 1.59e-14 & 5\\
 589824 & 0.0383 & 6.85e-02 & 0.998 & 1.23e-02 & 0.999 & 1.64e-13 & 6\\
 4096512 & 0.0191 & 3.43e-02 & 0.999 & 6.15e-03 & 0.999 & 1.31e-13 & 6\\
\hline
\multicolumn{8}{|c|}{Errors and convergence rates for $k = 1$}\\
\hline
  624 & 0.6124 & 3.53e-01 & $\star$ & 6.32e-02 & $\star$ & 1.48e-14& 5 \\
   4992 & 0.3062 & 9.86e-02 & 1.840 & 1.73e-02 & 1.871 & 3.33e-14 & 6 \\
  39936 & 0.1531 & 2.55e-02 & 1.950 & 4.42e-03 & 1.968 & 9.14e-14 & 6\\
 319488 & 0.0765 & 6.45e-03 & 1.986 & 1.11e-03 & 1.992 & 2.25e-13 & 6\\
 2555904 & 0.0383 & 1.62e-03 & 1.993 & 2.78e-04 & 1.996 & 1.01e-13 & 7 \\
 20447232 & 0.0191 & 4.07e-04 & 1.997 & 6.95e-05 & 1.998 & 2.01e-13 & 7\\
 \hline
\end{tabular}}
\end{center}

\vspace{0.1cm}
\caption{Example 1. Error history against smooth manufactured solutions \revone{in 3D} (errors on a sequence of successively refined grids, convergence rates, 
and norm of the divergence of the discrete velocity) for different polynomial degrees, and iteration count for the nonlinear Newton--Raphson solver. Here we use unity parameters (and $r=3$) and the $\bH^{3}(\vdiv,\Omega)$ and $\rL^2(\Omega)$ norms for velocity and pressure, respectively. } \label{table:unity parameters}
\end{table}

\begin{figure}[!t]
 \begin{center}
  \includegraphics[width=0.485\textwidth]{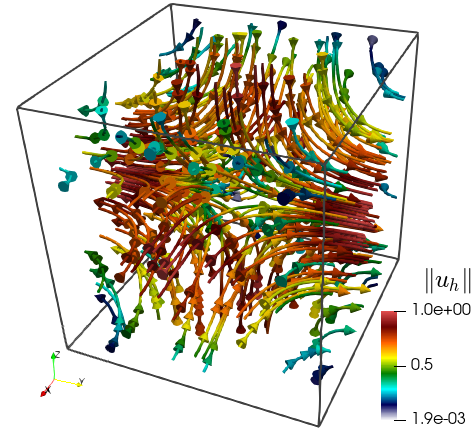}
  \includegraphics[width=0.485\textwidth]{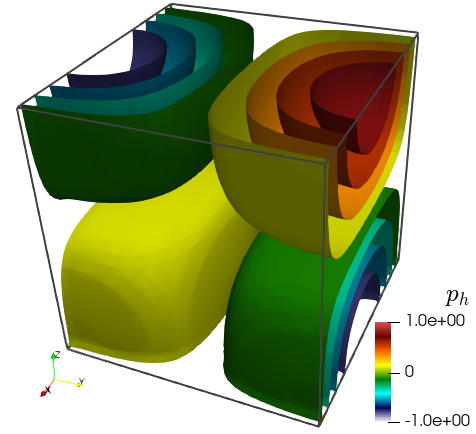}
 \end{center}
  
\vspace{-0.3cm}
\caption{\revtwo{Example 1. Streamlines of the approximate velocity (left) and iso-contours of pressure distribution (right) for the accuracy test of the Darcy--Forchheimer equations in 3D with unit parameters.}}\label{fig:sample01}
\end{figure}

\medskip 
\noindent\textbf{Example 1 (convergence against smooth solutions \revtwo{in 3D}).} Consider the unit \revtwo{cube  domain \(\Omega = (0,1)^3\) along with the following manufactured solutions
\begin{gather*}
 p(x, y,z) := \sin(\pi x)\cos(\pi y)\sin(\pi z),\quad 
\bu(x,y,z) := \begin{pmatrix}
\cos(\pi x)\sin(\pi y)\sin(\pi z)\\
-\sin(\pi x)\cos(\pi y)\sin(\pi z)\\
\sin(\pi x)\sin(\pi y) \cos(\pi z)\end{pmatrix}.\end{gather*}
We set the left, bottom, and front sides of the cube as $\Gamma_{\bu}$ and the right, top, and back} sides as $\Gamma_p$. The forcing vector $\bf$ and the fluid source $g$, as well as the (non-homogeneous) essential boundary condition on the flux, are computed based on the manufactured solutions above. We take, for this first set of examples, simply dimensional unity parameters $\kappa = \rF = 1$ and index $r = 3$. 
The error history associated with the 
proposed mixed finite element  method on a sequence of successively refined partitions of the domain, are presented in Table~\ref{table:unity parameters}, where we also tabulate rates of error decay computed as 
$\texttt{rate}  =\log(e_{(\bullet)}/\tilde{e}_{(\bullet)})[\log(h/\tilde{h})]^{-1}$,
where $e,\tilde{e}$ denote errors generated on two
consecutive  meshes of sizes $h$ and~$\tilde{h}$, respectively. The results indicate optimal convergence in all fields and for the two tested polynomial degrees. \revtwo{Moreover, for this problem  the number of  Newton--Raphson iterations required to reach a tolerance (either absolute or relative) of $10^{-8}$ is up to seven. This depends on the initial guess,  and here we have used a constant velocity and pressure such that every entry of the vector of degrees of freedom is $10^{-4}$}. Sample approximate solutions for velocity and pressure obtained with the second-order method (with $k=1$) are plotted in Figure~\ref{fig:sample01}. For these results, we consider the norms $\bH^{3}(\vdiv,\Omega)$ and $\rL^2(\Omega)$ for velocity and pressure (with no parameter weighting).

\begin{table}[!t]
\setlength{\tabcolsep}{4.4pt}
\begin{center}
\revtwo{\footnotesize\begin{tabular}{|rccccccc|}
\hline
DoF  &    $h$ & $\|\bu-\bu_h\|_{3,\vdiv,\Omega}\!$  &  \texttt{rate} &  $\|p-p_h\|_{0,\Omega}$  &  \texttt{rate} & $\|P_h(\vdiv\bu_h-g)\|_{\ell^\infty}$ & \texttt{it} \\
\hline
\multicolumn{8}{|c|}{Errors and convergence rates for $k = 0$}\\
\hline
     36 & 0.7071 & 5.18e-01 & $\star$ & 1.01e+09 & $\star$ & 2.22e-16  & 4 \\
     84 & 0.3536 & 4.25e-01 & 0.285 & 3.88e+08 & 1.385 & 1.33e-15 & 6 \\
    240 & 0.1768 & 4.56e-01 & -0.101 & 1.78e+08 & 1.127 & 1.35e-15 & 6 \\
    792 & 0.0884 & 2.77e-01 & 0.718 & 6.29e+07 & 1.499 & 4.00e-15 & 5 \\
   2856 & 0.0442 & 1.53e-01 & 0.857 & 1.93e+07 & 1.707 & 9.32e-15 & 5 \\
  10824 & 0.0221 & 8.29e-02 & 0.884 & 6.53e+06 & 1.561 & 2.20e-14 & 5 \\
  \hline
\multicolumn{8}{|c|}{Errors and convergence rates for $k = 1$}\\
\hline
  120 & 0.7071 & 4.07e-01 & $\star$ & 1.18e+08 & $\star$ & 3.02e-15 & 6\\ 
    276 & 0.3536 & 3.70e-01 & 0.138 & 6.18e+07 & 0.929 & 8.07e-15 & 6 \\
    780 & 0.1768 & 2.47e-01 & 0.582 & 2.29e+07 & 1.434 & 1.39e-14  & 6 \\
   2556 & 0.0884 & 1.13e-01 & 1.136 & 7.71e+06 & 1.568 & 5.32e-14  & 7 \\
   9180 & 0.0442 & 3.74e-02 & 1.589 & 2.34e+06 & 1.724 & 2.06e-13  & 6 \\
  34716 & 0.0221 & 1.67e-02 & 1.159 & 6.33e+05 & 1.883 & 4.90e-13 & 7 \\
 \hline
 \hline
DoF  &    $h$ & $\|\bu-\bu_h\|_{\bV}\!$  &  \texttt{rate} &  $\|p-p_h\|_{\widehat{\rQ}(\cT_h)\vphantom{\int_{X_x}}}$  &  \texttt{rate} & $\|P_h(\vdiv\bu_h-g)\|_{\ell^\infty}$  \\
\hline
\multicolumn{8}{|c|}{Errors and convergence rates for $k = 0$}\\
\hline
     36 & 0.7071 & 1.44e+03 & $\star$ & 7.16e+02 & $\star$ & 2.22e-16 & 4 \\
     84 & 0.3536 & 5.95e+02 & 1.276 & 2.67e+02 & 1.424 & 6.66e-16 & 6\\
    240 & 0.1768 & 3.65e+02 & 0.705 & 1.14e+02 & 1.221 & 1.18e-15 & 6 \\
    792 & 0.0884 & 1.64e+02 & 1.153 & 4.08e+01 & 1.490 & 3.00e-15 & 5\\
   2856 & 0.0442 & 5.26e+01 & 1.642 & 2.16e+01 & 1.352 & 7.55e-15 & 5\\
  10824 & 0.0221 & 2.18e+01 & 1.272 & 1.04e+00 & 1.203 & 1.47e-14 & 5\\
\hline
\multicolumn{8}{|c|}{Errors and convergence rates for $k = 1$}\\
\hline
   120 & 0.7071 & 4.64e+02 & $\star$ & 1.66e+02 & $\star$ & 3.06e-15 & 6 \\
    276 & 0.3536 & 2.25e+02 & 1.044 & 3.62e+01 & 2.194 & 4.72e-15 & 6\\
    780 & 0.1768 & 1.11e+02 & 1.017 & 1.29e+01 & 1.484 & 1.28e-14 & 6 \\
   2556 & 0.0884 & 3.41e+01 & 1.706 & 5.59e+00 & 1.211 & 4.45e-14 & 7 \\
   9180 & 0.0442 & 7.01e+00 & 2.282 & 2.56e+00 & 1.129 & 6.74e-14 & 6 \\
  34716 & 0.0221 & 1.16e+00 & 2.601 & 6.80e-01 & 1.912 & 1.34e-13  & 7 \\
 \hline
\end{tabular}}
\end{center}

\vspace{0.1cm}
\caption{Example 2. Error history against a fine-mesh reference solution (experimental errors on a sequence of successively refined grids, convergence rates,  norm of the divergence of the discrete velocity, \revtwo{and Newton iteration count}) for different polynomial degrees. Here we use $\kappa=10^{-8}$, $\rF = 10^4$ (and $r=3$), and the non-weighted (top rows) and parameter-weighted (bottom rows) norms for velocity and pressure.} \label{table:extreme parameters}
\end{table}

\medskip
\noindent\textbf{Example 2 (experimental self-convergence with extreme parameters).}  For a second run of simulations we use the domain $\Omega=(0,2)\times(0,1)$ without a known analytical solution, setting the problem data as $\bf = \bzero$, $g = 0$, mixed boundary conditions as follows $\bu \cdot \bn = 2.5y(1-y)$ on the left segment, $\bu\cdot\bn = 0$ on the top and bottom segments, and $p=0$ on the right edge. The permeability is heterogeneous $\kappa(\bx)=\kappa_0(1+ \exp(-\frac12[10y-5-\sin(10x)]^2)$, and we consider the more \revtwo{pronounced} parameter values $\kappa_0 = 10^{-8}$, $\rF = 10^{4}$, and $r =3$. In Table~\ref{table:extreme parameters} we show the error decay where we compute experimental errors against a  fine-mesh reference solution $\bu_{\mathrm{ref}},p_{\mathrm{ref}}$(with two more refinement steps than the finest level). In the first part of the table we use the non-weighted norms and we see very large errors in the pressure \revtwo{and} a sub-optimal error decay for the velocity in the second-order case.  In contrast, for the second part of the table the errors are now measured in the weighted norms associated with $\bV$ and $\widehat{\rQ}(\cT_h)$ defined in \eqref{eq:discrete.Vh} and \eqref{eq:broken.Qhat}, respectively, where we move the scaling of the now variable permeability inside the norm definition. {We recall from \cite{baerland2020observation} that in the space $\rQ_h$ the realisation of the required discrete weighted Laplacian operator that is the Riesz map for  $\kappa^{\frac12}\rH^1_{h,\Gamma_p}(\Omega)$ is as follows
\begin{align*} \langle -\vdiv(\kappa(\bx) \nabla p_h), q_h\rangle & = \sum_{K\in \cT_h} (\kappa(\bx) \nabla p_h, \nabla q_h)_K 
+ \sum_{F\in \cE_h\cup \cE_h^{\Gamma_p}} \frac{1}{h_F}(\kappa(\bx)[\![ p_h ]\!],[\![ q_h ]\!])_F.\end{align*}
Here $\cE_h$ denotes the set of internal facets, $\cE_h^{\Gamma_p}$ denotes the set of facets lying on the sub-boundary $\Gamma_p$, and 
$[\![ \bullet ]\!]$ represents the 
jump operator across a facet (and adopt the convention that it reduces to the identity if the face lies on the boundary).  In turn, for the space $\rF^{-\frac13}\rW^{1,\frac32}_{h,\Gamma_p}(\Omega)$  the natural operator (duality map) is the nonlinear discrete $r'-$Laplacian
\begin{align*} \langle -\vdiv (\rF^{r-4}|\nabla p_h|^{r'-2}\nabla p_h),q_h\rangle &= \sum_{K\in \cT_h} \rF^{r-4} (|\nabla p_h|^{r'-2}
\nabla p_h, \nabla q_h)_K \\ &\quad + \sum_{F\in \cE_h\cup \cE_h^{\Gamma_p}}\!\!\! \rF^{r-4} h_F^{1-r'}(
|[\![ p_h ]\!]|^{r'-2}
[\![ p_h ]\!],[\![ q_h ]\!])_F,  \end{align*}
with $r' = \frac32$ (see, e.g., \cite{burman2008discontinuous}). 
However, since we require a linear operator to define the preconditioner-induced norm, we use instead the following linearisation around the reference pressure 
\begin{align*}
& \sum_{K\in \cT_h} \rF^{r-4} (|\nabla p_{\mathrm{ref}}|^{r'-2}
\nabla p_h, \nabla q_h)_K + \sum_{F\in \cE_h\cup \cE_h^{\Gamma_p}} \!\!\!\!\!\rF^{r-4}(r'-1)h_F^{1-r'}(
|[\![ p_{\mathrm{ref}} ]\!]|^{r'-2}
[\![ p_h ]\!],[\![ q_h ]\!])_F \\
&\quad + \sum_{K\in \cT_h} \rF^{r-4}(r'-2) (|\nabla p_{\mathrm{ref}}|^{r'-4}
(\nabla p_{\mathrm{ref}}\cdot\nabla p_h),(\nabla p_{\mathrm{ref}} \cdot \nabla q_h))_K.  
\end{align*}
It is observed that the rates remain optimal, as anticipated by Theorem~\ref{th:convergence}. Snapshots of the numerical solutions for the lowest-order scheme are shown in Figure~\ref{fig:example02}.

\begin{figure}[t!]
    \centering
    \includegraphics[width=0.325\linewidth]{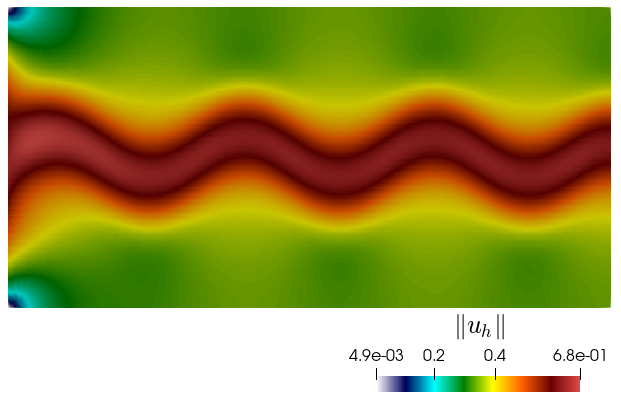}
    \includegraphics[width=0.325\linewidth]{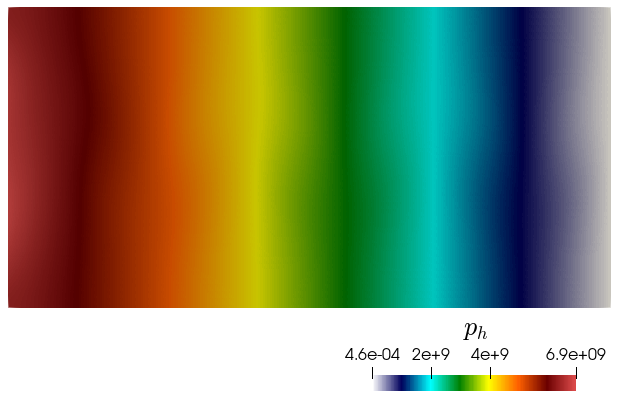}
    \includegraphics[width=0.325\linewidth]{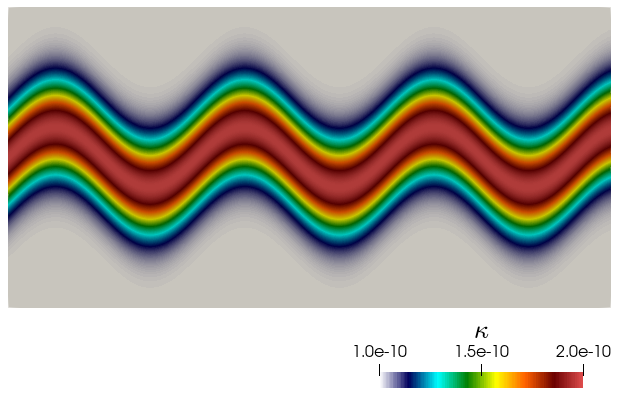}
    \caption{Example 2. Sample of filtration velocity magnitude, pressure profile, and heterogeneous permeability, for the convergence test with no known analytical solution.}
    \label{fig:example02}
\end{figure}

\medskip 
\noindent\textbf{Example 3 (application to filtration in highly permeable media).} In this test example, we simulate typical flow through a porous domain with large obstacles. The domain has length of 0.1\,m and includes seven circular cylinders of different sizes. The permeability is not isotropic, but a tensor (that also needs to be scaled with fluid viscosity). In its principal direction frame, we have 
\begin{equation} \bkappa_0 = \frac{1}{\nu}\begin{pmatrix} \kappa_1 & 0 \\ 0 & \kappa_2\end{pmatrix},\label{rescaled-kappa}\end{equation}
with $\nu = 10^{-6}$\,m$^2$s$^{-1}$, $\kappa_1 = 5\cdot10^{-10}$\,m$^2$ and $\kappa_2 = 10^{-10}$\,m$^2$, and the tensor is rotated by $4.7^\circ =  0.082$\,rad from the $x-$axis giving,  
\[ \bkappa  = \bR \bkappa_0 \bR^{\tt t}, \quad \text{where} \quad \bR = \begin{pmatrix} \cos(\theta) & -\sin(\theta) \\ \sin(\theta) &  \cos(\theta)\end{pmatrix}.\]       
The external force $\bf$ has magnitude $1.3\cdot 10^{-5}$\,ms$^{-2}$ and it points in the $x$ direction, and we assume zero fluid source $g=0$. We consider an inlet boundary on the bottom left circular arc, on which we impose a radial inlet velocity $\bu \cdot \bn = \frac{1}{4}\bx/|\bx|$\,m/s; an outlet boundary on the upper right circular arc, on which we impose $p = 0$, and on the remainder of the boundary (including the obstacles) we set $\bu\cdot \bn = 0$. Parameter values and flow configurations have been adapted from  \cite{hassard2023comparison}.
The unstructured triangular mesh has 26,184 elements and we use for this test a second-order scheme (setting $k=1$), giving a total number of 197,296 DoFs. The remaining parameters are the Forchheimer coefficient and index, taken here as $\rF = 1$\,m$^{-1} \cdot \rho$ with $\rho = 10^3$\,kg\,m$^{-3}$, and $r=3$. Finally, we note that the use of a Forchheimer correction term is justified in this case since the Forchheimer threshold coefficient (recall that the permeability coefficient is already scaled with viscosity in \eqref{rescaled-kappa}) 
\[ \rF_0 = 
\kappa_{\max}
\rF  |\bu|\]
is 0.245 (the rule suggested in \cite{zeng2006criterion}, see also \cite{fumagalli2022model}, specifies that it has to be larger than 0.1) when taking for $|\bu|$ its maximum value over $\Omega$ (not known a priori but computed to be 0.49). 
We depict in Figure~\ref{fig:sample03} the approximate solutions, indicating the expected filtration patterns in porous media with large obstacles. \revtwo{For this set of simulations the Newton--Raphson solver took nine iterations to converge, and we stress that the iteration count is dependent on the initial guess and parameter configuration.}

\begin{figure}[!t]
 \begin{center}
  \includegraphics[width=0.485\textwidth]{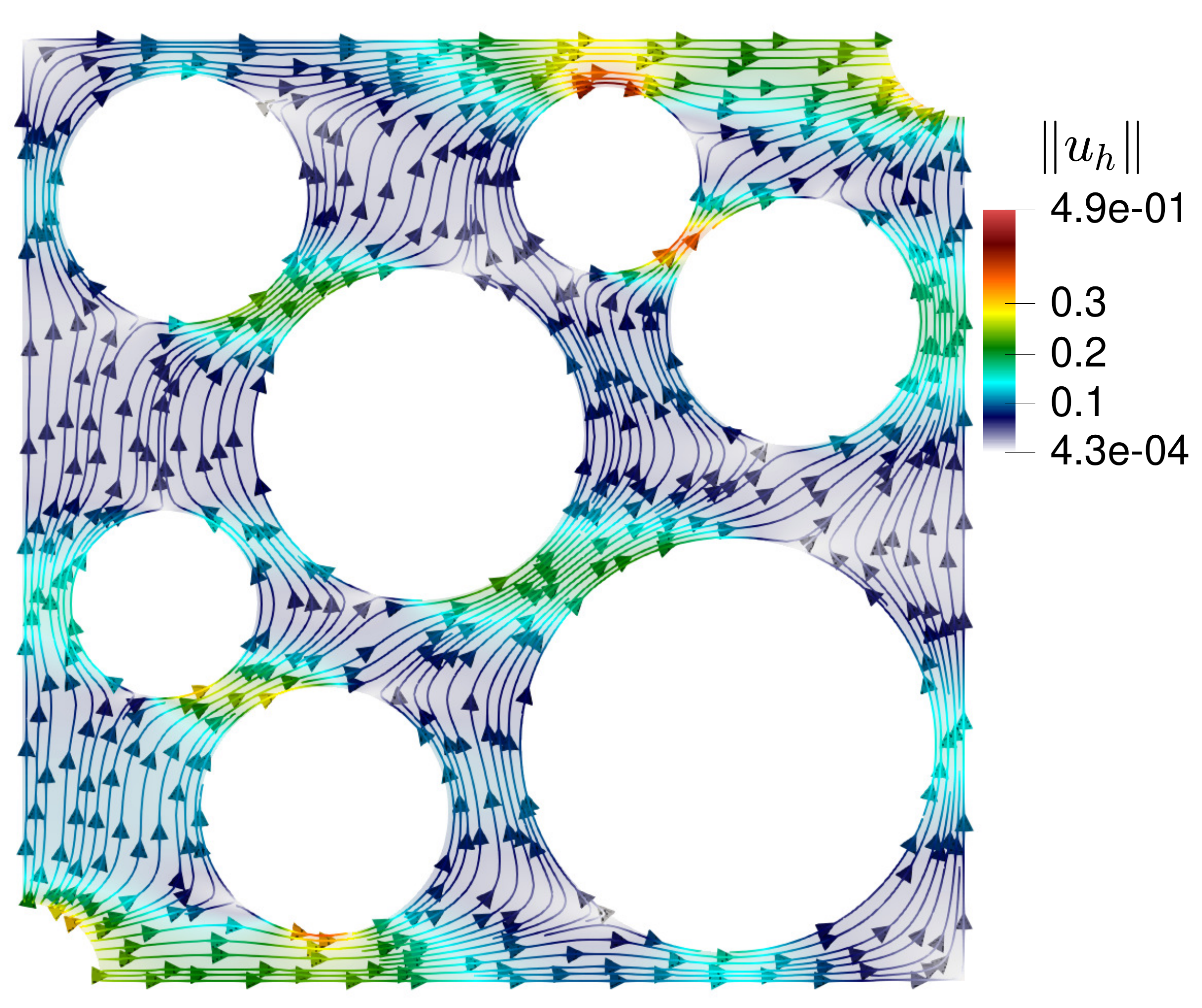}
  \includegraphics[width=0.485\textwidth]{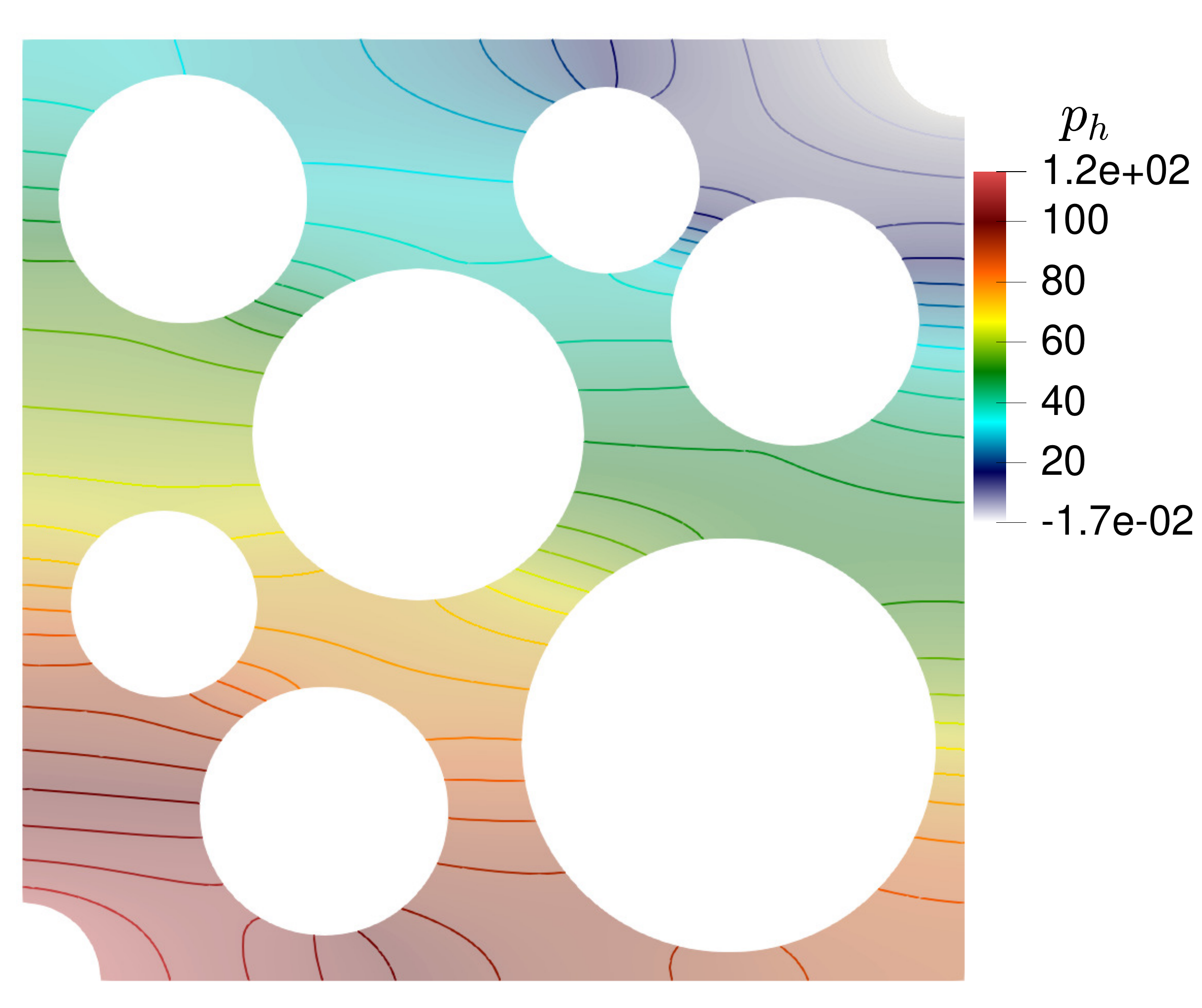}
 \end{center}
  
\vspace{-0.3cm}
\caption{Example 3. Streamlines of the approximate velocity coloured by velocity magnitude (left) and contours of pressure distribution (right), for the flow across obstacles using the Darcy--Forchheimer equations with anisotropic permeability.  A second-order scheme ($k=1$) is used.}\label{fig:sample03}
\end{figure}

\medskip 
\textbf{Example 4 (operator preconditioning for the linearised problem).} To conclude we conduct a series of computations where the  permeability $\kappa$ \revtwo{is varied over several orders of magnitude, as well as the Forchheimer coefficient $\rF$. We also take three values of the index $r$}.  This is done again using the domain $\Omega = (0,1)^2$, the lowest-order finite element family, and mixed boundary conditions as in Example 1. The performance of the numerical method is assessed by comparing an estimate of the condition number of the  linearised system matrix \eqref{eq:newton} preconditioned with the weighted duality map \eqref{precond.linear}, \revtwo{and also inspect the number of MinRes iterations required to solve the preconditioned system}. Table~\ref{tab:precond} illustrates the robustness of the proposed preconditioner across the parametric space. Additionally, the performance of the preconditioner corresponding to the weighted Riesz map \eqref{precond.linear.new} has also been illustrated in Table~\ref{tab:precond.new}, which seems to perform slightly better than \eqref{precond.linear}. This is a behaviour very similar  to the one reported in \cite{baerland2020observation}, but the map \eqref{precond.linear}  is  preferable in a multiphysics context where the scaling of the divergence component of the velocity block  is not feasible. \revtwo{Note also that both preconditioners are perfectly robust on $r$ (the three sub-tables on each table look almost identical in both condition number and MinRes iteration count). They are also robust on $h$ (the growth in the rows is minimal). However, for the preconditioner induced by \eqref{precond.linear}, there are some effects of small values of $\kappa$, which are more noticeable in the MinRes iterations than in the condition number. It is evident from Table~\ref{tab:precond.new} that large values of $\rF$ only seem to affect the number of iterations when $\kappa$ is large too, but in applications one typically expects large $\rF$ associated with small  $\kappa$.} 

\begin{table}[t!]
\setlength{\tabcolsep}{2.5pt}
\begin{center}
\revtwo{\scriptsize\begin{tabular}{|c|c|c|lll|}
  \hline
  \multirow{2}{*}{$r$} & \multirow{2}{*}{$\kappa$} & \multirow{2}{*}{$\rF$} & \multicolumn{3}{c|}{$h$}\\
  \cline{4-6}
  & & & {$\sqrt{2}^{-2}$ } & {$\sqrt{2}^{-3}$} & {$\sqrt{2}^{-4}$} \vphantom{$\int^{\int^X}$}\\
\hline  
  \multirow{9}{*}{3} 
  & \multirow{3}{*}{$10^{-9}$} \vphantom{$\int^{X}$}& {$10^{-9}$} 
                      & 3.74 (23) & 4.18 (31) & 4.41 (33) \\  
&  & {1}          & 3.74 (23) & 4.18 (31) & 4.41 (33)\\
&  & {$10^4$} & 3.74 (23) & 4.18 (31) & 4.41 (33)\\
  \cline{2-6}
&\multirow{3}{*}{$10^{-4}$} & {$10^{-9}$} \vphantom{$\int^{X}$} 
                       & 3.65 (25) & 3.82 (29) & 3.57 (30)\\ 
&  & {1}           & 3.65 (25) & 3.82 (29) & 3.57 (30) \\
&  & {$10^4$} & 3.65 (25) & 3.82 (29) & 3.59 (30) \\
\cline{2-6}  
&\multirow{3}{*}{1} & {$10^{-9}$} \vphantom{$\int^{X}$} 
                       & 1.23 (11) & 1.21 (11) & 1.21 (11)\\
&  & {1}           & 1.24 (11) & 1.23 (11) & 1.22 (11) \\      
&  & {$10^4$} & 4.09 (27) & 4.07 (33) & 4.06 (35)\\
\hline
\end{tabular}
\quad 
\begin{tabular}{|c|c|c|lll|}
  \hline
  \multirow{2}{*}{$r$} & \multirow{2}{*}{$\kappa$} & \multirow{2}{*}{$\rF$} & \multicolumn{3}{c|}{$h$}\\
  \cline{4-6}
  & & & {$\sqrt{2}^{-2}$ } & {$\sqrt{2}^{-3}$} & {$\sqrt{2}^{-4}$} \vphantom{$\int^{\int^X}$}\\
\hline  
  \multirow{9}{*}{3.5} 
  & \multirow{3}{*}{$10^{-9}$} \vphantom{$\int^{X}$}& {$10^{-9}$} 
                      & 3.74 (23) & 4.18 (31) & 4.41 (33) \\  
&  & {1}          & 3.74 (23) & 4.18 (31) & 4.41 (33)\\
&  & {$10^4$} & 3.74 (23) & 4.18 (31) & 4.41 (33)\\
  \cline{2-6}
&\multirow{3}{*}{$10^{-4}$} & {$10^{-9}$} \vphantom{$\int^{X}$} 
                       & 3.65 (25) & 3.82 (29) & 3.57 (30)\\ 
&  & {1}           & 3.65 (25) & 3.82 (29) & 3.57 (30) \\
&  & {$10^4$} & 3.67 (25) & 3.85 (29) & 3.58 (30) \\
\cline{2-6}  
&\multirow{3}{*}{1} & {$10^{-9}$} \vphantom{$\int^{X}$} 
                       & 1.23 (11) & 1.21 (11) & 1.21 (11)\\
&  & {1}           & 1.23 (11) & 1.22 (11) & 1.22 (11) \\      
&  & {$10^4$} & 4.58 (31) & 3.84 (31) & 3.75 (32)\\
\hline
\end{tabular}
\medskip 

\begin{tabular}{|c|c|c|lll|}
  \hline
  \multirow{2}{*}{$r$} & \multirow{2}{*}{$\kappa$} & \multirow{2}{*}{$\rF$} & \multicolumn{3}{c|}{$h$}\\
  \cline{4-6}
  & & & {$\sqrt{2}^{-2}$ } & {$\sqrt{2}^{-3}$} & {$\sqrt{2}^{-4}$} \vphantom{$\int^{\int^X}$}\\
\hline  
  \multirow{9}{*}{4} 
  & \multirow{3}{*}{$10^{-9}$} \vphantom{$\int^{X}$}& {$10^{-9}$} 
                      & 3.74 (23) & 4.18 (31) & 4.41 (33) \\  
&  & {1}          & 3.74 (23) & 4.18 (31) & 4.41 (33)\\
&  & {$10^4$} & 3.74 (23) & 4.18 (31) & 4.41 (33)\\
  \cline{2-6}
&\multirow{3}{*}{$10^{-4}$} & {$10^{-9}$} \vphantom{$\int^{X}$} 
                       & 3.65 (25) & 3.82 (29) & 3.57 (30)\\ 
&  & {1}           & 3.65 (25) & 3.82 (29) & 3.57 (30) \\
&  & {$10^4$} & 3.66 (25) & 3.84 (29) & 3.57 (30) \\
\cline{2-6}  
&\multirow{3}{*}{1} & {$10^{-9}$} \vphantom{$\int^{X}$} 
                       & 1.23 (11) & 1.21 (11) & 1.21 (11)\\
&  & {1}           & 1.23 (11) & 1.22 (11) & 1.22 (11) \\      
&  & {$10^4$} & 3.84 (27) & 3.16 (26) & 3.05 (26)\\
\hline
\end{tabular}}
\end{center}
\vspace{3mm}
\caption{Example 4. Estimated condition number \revtwo{and MinRes iterations required to converge, associated with} the preconditioned system matrix of the linearised Darcy--Forchheimer equations using the preconditioner induced by \eqref{precond.linear}.}
    \label{tab:precond}

\end{table}


\begin{table}[t!]
\setlength{\tabcolsep}{2.5pt}
\begin{center}
\revtwo{\scriptsize\begin{tabular}{|c|c|c|lll|}
  \hline
  \multirow{2}{*}{$r$} & \multirow{2}{*}{$\kappa$} & \multirow{2}{*}{$\rF$} & \multicolumn{3}{c|}{$h$}\\
  \cline{4-6}
  & & & {$\sqrt{2}^{-2}$ } & {$\sqrt{2}^{-3}$} & {$\sqrt{2}^{-4}$} \vphantom{$\int^{\int^X}$}\\
\hline  
  \multirow{9}{*}{3} 
  & \multirow{3}{*}{$10^{-9}$} \vphantom{$\int^{X}$}& {$10^{-9}$} 
                      & 1.20 (5) & 1.20 (1) & 1.20 (1) \\  
&  & {1}          & 1.20 (5) & 1.20 (1) & 1.20 (1)\\
&  & {$10^4$} & 1.20 (5) & 1.20 (1) & 1.20 (1)\\
  \cline{2-6}
&\multirow{3}{*}{$10^{-4}$} & {$10^{-9}$} \vphantom{$\int^{X}$} 
                       & 1.20 (6) & 1.20 (6) & 1.20 (6)\\ 
&  & {1}           & 1.20 (6) & 1.20 (6) & 1.20 (6) \\
&  & {$10^4$} & 1.20 (8) & 1.20 (8) & 1.20 (8) \\
\cline{2-6}  
&\multirow{3}{*}{1} & {$10^{-9}$} \vphantom{$\int^{X}$} 
                       & 1.20 (6) & 1.21 (6) & 1.20 (6)\\
&  & {1}           & 1.20 (8) & 1.20 (8) & 1.20 (8) \\      
&  & {$10^4$} & 1.39 (13) & 1.36 (13) & 1.32 (11)\\
\hline
\end{tabular}
\quad 
\begin{tabular}{|c|c|c|lll|}
  \hline
  \multirow{2}{*}{$r$} & \multirow{2}{*}{$\kappa$} & \multirow{2}{*}{$\rF$} & \multicolumn{3}{c|}{$h$}\\
  \cline{4-6}
  & & & {$\sqrt{2}^{-2}$ } & {$\sqrt{2}^{-3}$} & {$\sqrt{2}^{-4}$} \vphantom{$\int^{\int^X}$}\\
\hline  
  \multirow{9}{*}{3.5} 
  & \multirow{3}{*}{$10^{-9}$} \vphantom{$\int^{X}$}& {$10^{-9}$} 
                      & 1.20 (5) & 1.20 (1) & 1.20 (1) \\  
&  & {1}          & 1.20 (5) & 1.20 (1) & 1.20 (1)\\
&  & {$10^4$} & 1.20 (5) & 1.20 (1) & 1.20 (1)\\
  \cline{2-6}
&\multirow{3}{*}{$10^{-4}$} & {$10^{-9}$} \vphantom{$\int^{X}$} 
                       & 1.20 (6) & 1.20 (6) & 1.20 (6)\\ 
&  & {1}           & 1.20 (6) & 1.20 (6) & 1.20 (6) \\
&  & {$10^4$} & 1.20 (8) & 1.20 (8) & 1.20 (8) \\
\cline{2-6}  
&\multirow{3}{*}{1} & {$10^{-9}$} \vphantom{$\int^{X}$} 
                       & 1.20 (6) & 1.21 (6) & 1.20 (6)\\
&  & {1}           & 1.20 (8) & 1.20 (8) & 1.20 (8) \\      
&  & {$10^4$} & 1.69 (17) & 1.62 (15) & 1.53 (14)\\
\hline
\end{tabular}

\medskip 
\begin{tabular}{|c|c|c|lll|}
  \hline
  \multirow{2}{*}{$r$} & \multirow{2}{*}{$\kappa$} & \multirow{2}{*}{$\rF$} & \multicolumn{3}{c|}{$h$}\\
  \cline{4-6}
  & & & {$\sqrt{2}^{-2}$ } & {$\sqrt{2}^{-3}$} & {$\sqrt{2}^{-4}$} \vphantom{$\int^{\int^X}$}\\
\hline  
  \multirow{9}{*}{4} 
  & \multirow{3}{*}{$10^{-9}$} \vphantom{$\int^{X}$}& {$10^{-9}$} 
                      & 1.20 (5) & 1.20 (1) & 1.20 (1) \\  
&  & {1}          & 1.20 (5) & 1.20 (1) & 1.20 (1)\\
&  & {$10^4$} & 1.20 (5) & 1.20 (1) & 1.20 (1)\\
  \cline{2-6}
&\multirow{3}{*}{$10^{-4}$} & {$10^{-9}$} \vphantom{$\int^{X}$} 
                       & 1.20 (6) & 1.20 (6) & 1.20 (6)\\ 
&  & {1}           & 1.20 (6) & 1.20 (6) & 1.20 (6) \\
&  & {$10^4$} & 1.20 (7) & 1.20 (7) & 1.20 (7) \\
\cline{2-6}  
&\multirow{3}{*}{1} & {$10^{-9}$} \vphantom{$\int^{X}$} 
                       & 1.20 (6) & 1.21 (6) & 1.20 (6)\\
&  & {1}           & 1.20 (7) & 1.20 (7) & 1.20 (7) \\      
&  & {$10^4$} & 1.98 (19) & 1.73 (17) & 1.52 (13)\\
\hline
\end{tabular}}
\end{center}
\vspace{3mm}
\caption{Example 4. Estimated condition number \revtwo{and MinRes iterations required to converge, associated with}  the preconditioned system matrix of the linearised Darcy--Forchheimer equations using the preconditioner induced by \eqref{precond.linear.new}.}
    \label{tab:precond.new}
    
\end{table}

\revtwo{We stress that the inverse of each block in the preconditioner is applied in an abstract operator sense. In particular, we use the LinearOperator abstraction in Gridap/Julia to define the (linearised) weak form associated with each block of the preconditioner.
This allows us to apply a block-diagonal approximation of the preconditioning without explicitly assembling or inverting the full operator. In the numerical examples presented, each block is inverted using the sparse direct solver UMFPACK. We have not employed multigrid solvers, as the numerical examples are of moderate size and are intended to validate the theoretical framework rather than to optimise performance. Nevertheless, the proposed preconditioners are fully compatible with multigrid-based approximations, and we expect the robustness with respect to 
$\kappa$ and $\rF$ to be retained under standard spectral equivalence assumptions.}

    \bibliographystyle{siam}
\bibliography{refs}

\begin{thebibliography}{10}

\bibitem{al2024iterative}
{\sc R.~Al~Dbaissy and T.~Sayah}, {\em Iterative scheme for the
  {Darcy-Forchheimer} problem with pressure boundary condition}, WSEAS
  Transactions on Heat and Mass Transfer, 19 (2024), pp.~94--106.

\bibitem{arnold1997preconditioning}
{\sc D.~N. Arnold, R.~S. Falk, and R.~Winther}, {\em Preconditioning in
  {H}\((\mathrm{div})\) and applications}, Mathematics of Computation, 66
  (1997), pp.~957--984.

\bibitem{audu2019mixed}
{\sc J.~D. Audu, F.~Fairag, and K.~Mustapha}, {\em Mixed finite element
  analysis for generalized {Darcy--Forchheimer} model in porous media}, Journal
  of Computational and Applied Mathematics, 353 (2019), pp.~191--203.

\bibitem{badia22}
{\sc S.~Badia, A.~F. Mart{\'\i}n, and F.~Verdugo}, {\em {GridapDistributed}: a
  massively parallel finite element toolbox in {J}ulia}, Journal of Open Source
  Software, 7 (2022), p.~4157.

\bibitem{baerland2020observation}
{\sc T.~B{\ae}rland, M.~Kuchta, K.-A. Mardal, and T.~Thompson}, {\em An
  observation on the uniform preconditioners for the mixed {D}arcy problem},
  Numerical Methods for Partial Differential Equations, 36 (2020),
  pp.~1718--1734.

\bibitem{barrett1993finite}
{\sc J.~W. Barrett and W.~B. Liu}, {\em Finite element approximation of the
  p-laplacian}, Mathematics of computation, 61 (1993), pp.~523--537.

\bibitem{boffi2013mixed}
{\sc D.~Boffi, F.~Brezzi, and M.~Fortin}, {\em Mixed finite element methods and
  applications}, vol.~44, Springer, 2013.

\bibitem{boon22}
{\sc W.~M. Boon, M.~Hornkj{\o}l, M.~Kuchta, K.-A. Mardal, and R.~Ruiz-Baier},
  {\em Parameter-robust methods for the {Biot--Stokes} interfacial coupling
  without {L}agrange multipliers}, Journal of Computational Physics, 467
  (2022), pp.~e111464(1--25).

\bibitem{boon2021robust}
{\sc W.~M. Boon, M.~Kuchta, K.-A. Mardal, and R.~Ruiz-Baier}, {\em Robust
  preconditioners for perturbed saddle-point problems and conservative
  discretizations of {B}iot's equations utilizing total pressure}, SIAM Journal
  on Scientific Computing, 43 (2021), pp.~B961--B983.

\bibitem{burman2008discontinuous}
{\sc E.~Burman and A.~Ern}, {\em Discontinuous galerkin approximation with
  discrete variational principle for the nonlinear {L}aplacian}, Comptes
  Rendus. Math{\'e}matique, 346 (2008), pp.~1013--1016.

\bibitem{cimr_nmpde24}
{\sc R.~Caraballo, C.~W. In, A.~F. Mart\'in, and R.~Ruiz-Baier}, {\em Robust
  finite element methods and solvers for the {Biot--Brinkman} equations in
  vorticity form}, Numerical {M}ethods for {P}artial {D}ifferential
  {E}quations, 40 (2024), pp.~e23083(1--26).

\bibitem{caucao2023mixed}
{\sc S.~Caucao and M.~Discacciati}, {\em A mixed {FEM} for the coupled
  {Brinkman--Forchheimer/Darcy} problem}, Applied Numerical Mathematics, 190
  (2023), pp.~138--154.

\bibitem{caucao2020conforming}
{\sc S.~Caucao, M.~Discacciati, G.~N. Gatica, and R.~Oyarz{\'u}a}, {\em A
  conforming mixed finite element method for the
  {Navier--Stokes/Darcy--Forchheimer} coupled problem}, ESAIM: Mathematical
  Modelling and Numerical Analysis, 54 (2020), pp.~1689--1723.

\bibitem{ern2021finite}
{\sc A.~Ern and J.-L. Guermond}, {\em Finite elements {I}: {A}pproximation and
  interpolation}, vol.~72, Springer Nature, 2021.

\bibitem{farago2002numerical}
{\sc I.~Farag{\'o} and J.~Kar{\'a}tson}, {\em Numerical solution of nonlinear
  elliptic problems via preconditioning operators: Theory and applications},
  vol.~11, Nova Publishers, 2002.

\bibitem{fumagalli2022model}
{\sc A.~Fumagalli and F.~S. Patacchini}, {\em Model adaptation for non-linear
  elliptic equations in mixed form: existence of solutions and numerical
  strategies}, ESAIM: Mathematical Modelling and Numerical Analysis, 56 (2022),
  pp.~565--592.

\bibitem{gatica2014simple}
{\sc G.~N. Gatica}, {\em A simple introduction to the mixed finite element
  method}, Theory and Applications. Springer Briefs in Mathematics. Springer,
  London,  (2014).

\bibitem{gatica2022p}
{\sc G.~N. Gatica, S.~Meddahi, and R.~Ruiz-Baier}, {\em An {Lp} spaces-based
  formulation yielding a new fully mixed finite element method for the coupled
  {D}arcy and heat equations}, IMA Journal of Numerical Analysis, 42 (2022),
  pp.~3154--3206.

\bibitem{girault2008numerical}
{\sc V.~Girault and M.~F. Wheeler}, {\em Numerical discretization of a
  {Darcy--Forchheimer} model}, Numerische Mathematik, 110 (2008), pp.~161--198.

\bibitem{grillo2014darcy}
{\sc A.~Grillo, M.~Carfagnay, and S.~Federicoz}, {\em The {Darcy-Forchheimer}
  law for modelling fluid flow in biological tissues}, Theoretical and Applied
  Mechanics, 41 (2014), pp.~283--322.

\bibitem{hassard2023comparison}
{\sc P.~Hassard, I.~Turner, and D.~Lester}, {\em Comparison of lattice
  {B}oltzmann and boundary element methods for {S}tokes and visco-inertial flow
  in a two-dimensional porous medium}, Transport in Porous Media, 150 (2023),
  pp.~675--707.

\bibitem{hira2025advanced}
{\sc F.~S. Hira, Q.~Rubbab, I.~Ahmad, and A.~H. Majeed}, {\em Advanced
  computational modeling of {Darcy-Forchheimer} effects and
  nanoparticle-enhanced blood flow in stenosed arteries}, Engineering
  Applications of Artificial Intelligence, 152 (2025), p.~110737.

\bibitem{hong2017parameter}
{\sc Q.~Hong and J.~Kraus}, {\em Parameter-robust stability of classical
  three-field formulation of biot's consolidation model}, Electronic
  Transactions on Numerical Analysis, 48 (2018), pp.~202--226.

\bibitem{hong2019conservative}
{\sc Q.~Hong, J.~Kraus, M.~Lymbery, and F.~Philo}, {\em Conservative
  discretizations and parameter-robust preconditioners for {B}iot and
  multiple-network flux-based poroelasticity models}, Numerical Linear Algebra
  with Applications, 26 (2019), p.~e2242.

\bibitem{huang2018multigrid}
{\sc J.~Huang, L.~Chen, and H.~Rui}, {\em Multigrid methods for a mixed finite
  element method of the {D}arcy--{F}orchheimer model}, Journal of Scientific
  Computing, 74 (2018), pp.~396--411.

\bibitem{klawonn1995optimal}
{\sc A.~Klawonn}, {\em An optimal preconditioner for a class of saddle point
  problems with a penalty term. {P}art {II}: {G}eneral theory}, Tech. Rep.
  14/95, Westf\"alische Wilhelms-Universit\"at M\"unster, Germany, 1995.

\bibitem{liu2024experimental}
{\sc S.~Liu, A.~Ahmadi-Senichault, A.~Ben-Abdelwahed, H.~Yao, and J.~Lachaud},
  {\em Experimental investigation and {DEM-CFD} analysis of
  {Darcy--Forchheimer} flows in randomly packed bed systems of wood particles},
  International Journal of Heat and Mass Transfer, 235 (2024), p.~126229.

\bibitem{lopez2009comparison}
{\sc H.~L{\'o}pez, B.~Molina, and J.~J. Salas}, {\em Comparison between
  different numerical discretizations for a {Darcy-Forchheimer} model},
  Electronic Transactions on Numerical Analysis, 34 (2009), pp.~187--203.

\bibitem{mardal2011preconditioning}
{\sc K.-A. Mardal and R.~Winther}, {\em Preconditioning discretizations of
  systems of partial differential equations}, Numerical Linear Algebra with
  Applications, 18 (2011), pp.~1--40.

\bibitem{mathias2014heat}
{\sc S.~A. Mathias, J.~N. McElwaine, and J.~G. Gluyas}, {\em Heat transport and
  pressure buildup during carbon dioxide injection into depleted gas
  reservoirs}, Journal of Fluid Mechanics, 756 (2014), pp.~89--109.

\bibitem{maz_i_a2010elliptic}
{\sc V.~Maz'ya and J.~Rossmann}, {\em Elliptic Equations in Polyhedral
  Domains}, no.~162, American Mathematical Soc., 2010.

\bibitem{pan2012mixed}
{\sc H.~Pan and H.~Rui}, {\em Mixed element method for two-dimensional
  {D}arcy-{F}orchheimer model}, Journal of Scientific Computing, 52 (2012),
  pp.~563--587.

\bibitem{piersanti2021parameter}
{\sc E.~Piersanti, J.~J. Lee, T.~Thompson, K.-A. Mardal, and M.~E. Rognes},
  {\em Parameter robust preconditioning by congruence for multiple-network
  poroelasticity}, SIAM Journal on Scientific Computing, 43 (2021),
  pp.~B984--B1007.

\bibitem{schoberl2007symmetric}
{\sc J.~Schöberl and W.~Zulehner}, {\em Symmetric indefinite preconditioners
  for saddle point problems with applications to {PDE}-constrained optimization
  problems}, SIAM Journal on Matrix Analysis and Applications, 29 (2007),
  pp.~752--773.

\bibitem{vassilevski2013block}
{\sc P.~S. Vassilevski and U.~Villa}, {\em A block-diagonal algebraic multigrid
  preconditioner for the {B}rinkman problem}, SIAM Journal on Scientific
  Computing, 35 (2013), pp.~S3--S17.

\bibitem{zeng2006criterion}
{\sc Z.~Zeng and R.~Grigg}, {\em A criterion for non-{D}arcy flow in porous
  media}, Transport in Porous Media, 63 (2006), pp.~57--69.

\end{thebibliography}
\end{document}